\newtheorem{theorem}{Theorem}
\newtheorem{corollary}[theorem]{Corollary}
\newtheorem{lemma}[theorem]{Lemma}
\theoremstyle{definition}
\newtheorem{remark}{Remark}
\def\mR{{\mathbb R}}
\def\mT{{\mathbb T}}
\def\mS{{\mathbb S}^2}
\def\mM{{\mathbb M}}
\def\ssperp{{{\perp}}}
\def\Deltainv {\Delta^{\!\scriptstyle{-1}}}
\def\vu{{\mathbf{u}}}
\def\vv{{\mathbf{v}}}
\def\irr{\textnormal{irr}}
\def\inc{\textnormal{inc}}
\def\vuirr{\vu_\irr}\def\vuinc{\vu_\inc}
\def\ep{\varepsilon}
\def\epF{\delta}
\def\epR{\ep}
\def\pa{\partial}
\def\bpm{\begin{pmatrix}}
\def\epm{\end{pmatrix}}
\def\be{\begin{equation}}
\def\ee{\end{equation}}
\def\dv{\textnormal{\,div\,}}
\def\curl{\textnormal{\,curl\,}}
\def\grad{\nabla}
\def\rgrad{\nabla^{\ssperp}}
\def\pt{\pa_t}
\def\cA{{\mathcal A}}
\def\cL{{\mathcal L}}
\def\cN{{\mathcal N}}
\def\cH{{\mathcal H}}
\def\cG{{\mathcal G}}
\def\cAs{{\cA^s}}
\def\fCJ{\mathfrak{C}_J}
\def\fco{{\mathsf F}}
\def\nll{\textnormal{ker}}
\def\nllL{\nll\{\!\cL\!\}}
\def\pcL{{\displaystyle\operatorname*{\displaystyle\Pi}_{\scriptscriptstyle\nllL}}}
\def\vutil{\widetilde{\vu}}
\def\cn{\!\cdot\!\!\nabla}
\def\ba{\begin{aligned}}
\def\ea{\end{aligned}}
\def\ccdot{{\hspace{-0.15mm}\cdot }}
\def\sg{\mathfrak g}
\def\pa{\partial}
\def\pt{\pa_t}
 \def\cN{\mathcal N}
\def\qand{\quad\text{and}\quad}
\def\qfor{\quad\text{for}\quad}
\def\Vcol{\bpm\vu\\h\epm}
\def\ti{\tilde}
\title[SWE rotating surface]{Shallow water equations on a fast rotating surface}
\author{\bf Bin Cheng}
\address{Department of Mathematics, University of Surrey, Guildford, GU2 7XH, United Kingdom}
\email{b.cheng@surrey.ac.uk}
\author{\bf Steve Schochet}
\address{School of Mathematical Sciences, Tel-Aviv University, Tel Aviv 69978, Israel}
  \email{schochet@tauex.tau.ac.il}
\keywords{
Rotating shallow water equations,
PDEs on manifolds,
PDEs on surfaces,
 singular limits,
 uniform estimates,
 commutator,
 variable Coriolis parameter,
zonal flows,
  time-averages
}  
\date{16 July 2019}
\begin{document}
\maketitle
 
 \begin{abstract}
 We prove that for rotating shallow water equations on a surface of revolution with variable Coriolis parameter and vanishing Rossby and Froude numbers, the classical solution satisfies uniform estimates on a fixed time interval with no dependence on the small parameters. Upon a transformation using the solution operator associated with the large operator, the solution converges strongly to a limit for which the governing equation is given. We also characterize the kernel of the large operator and define a projection onto that kernel. With these tools, we are able to show that the time-averages of the solution are close to longitude-independent zonal flows and height field.
 \end{abstract}
 
 \bigskip
 
\section{\bf Introduction}
We investigate \emph{compressible} fluid dynamics on a  two-dimensional, smooth manifold $\mM$ which is a surface of revolution about the $z$ axis, generated by a curve connecting the north/south poles at $(0,0,\pm1)\in\mR^3$. The fluid is subject to the Coriolis force and we use a notation $J$ to denote the clockwise rotation operator on any vector field $\vu\in T\mM$ (tangent bundle of $\mM$), namely $J\vu$ is defined as the cross product of $\vu$ and the outward normal to the surface. 
A popular model from geophysics is the rotating shallow water (RSW) equations, 
\begin{subequations}\label{RSW}
\begin{align}
\label{RSW:u}\pt \vu+\nabla_\vu\vu=&-{1\over\epF}\grad h-{\fco\over\epR}J\vu\\
\label{RSW:h}\pt h+\nabla_\vu h+h\dv \vu=&-{1\over\epF}\dv\vu
\end{align}
\end{subequations}
where $\epF$ denotes the Froude number and $\epR$ the {\it planetary}  Rossby number. The variable $h$ denotes perturbation of height against the background rescaled to 1, so that the total height is $1+\epF h$. The notation $\nabla_\vu h$ is understood simply as $\vu\cn h$ because $h$ is a scalar defined everywhere on the surface. The notation $\nabla_\vu\vu$ denotes the covariant derivative of $\vu$ along the vector field $\vu$ and can be understood\footnote{The intrinsic definition of covariant derivatives depend on the provision of connections or the full curvature tensor.  It can also be described extrinsically, if the surface is embedded in a usual Euclidean space and we allow that Euclidean space to provide all information needed on the metric and curvature tensor. We use the latter approach here.} as the result of projecting the  Cartesian form $(\nabla_\vu u_1,\nabla_\vu u_2,\nabla_\vu u_3)$ onto $T\mM$. Finally, the scalar factor $\fco$ represents variation in the Coriolis parameter. For physicality, $\fco$ takes value 1 at the north pole and $-1$ at the south pole.

This system is used as a standard model for testing numerical code on spherical domains \cite{RSW:testcases}.


We define the following notation for the large operator  on the right side of \eqref{RSW}, 
\be\label{def:cM}
\ba 
\cL&:=\frac{1}{\epF}\cL_\pa+{1\over\epR}\cL_0\\
\text{where } \qquad\cL_\pa\Vcol&:=\bpm \grad h\\\dv \vu\epm \qand \cL_0\Vcol:=\bpm \fco J\vu\\0\epm.
\ea
\ee
 
Without loss of generality, we also impose zero global mean condition on $h$
\be\label{h:global:mean}\nonumber\int_{\mS}h(t,x)\,dx=0\quad\mbox{ for all times }t\ee
since for smooth solutions, the global mean of $h$ is time-invariant due to the conservation law \eqref{RSW:h}.

Surface $\mM$ is parametrised by longitude-colatitude pair $(p_1,p_2)$ as follows.
The axis of revolution is the $z$ axis. The generating curve 
is given in  Cartesian coordinates as $(x,y,z)=(\sin p_2,0,\cos p_2)R(p_2)$ for $p_2\in[0,\pi]$.
Then, the  surface of revolution $\mM$ is parametrized in Cartesian coordinates as 
\be\label{surface:par}
\bpm
x\\y\\z
\epm=
R(p_2)\bpm
\cos p_1\sin p_2\\\sin p_1\sin p_2\\\cos p_2
\epm \qfor (p_1,p_2)\in\mT_{2\pi}\times [0,\pi],
\ee
where $\mT_{2\pi}$ denotes the one-dimensional torus of length $2\pi$ and function $R\in C^\infty([0,\pi]) $ satisfies
\[
\inf_{[0,\pi]}R(\cdot)>0,\quad R(0)=R(\pi)=1.
\]
 The exact spherical domain is represented by $R(p_2)\equiv1$.  There are further mild conditions on $R$ to ensure $\mM$ is a reasonable manifold to work with, and their details are given in \eqref{cond:chart}, \eqref{cond:Rm} and discussed therein. Also, the detailed geometry of $\mM$ is discussed in Section \ref{sec:geo} in elementary terms. For now, we only need to define $g_1,g_2$ to be the diagonal entries of the metric tensor  \eqref{metric:gij} namely
 \[
 g_1(p_2):=\sin^2(p_2)R^2(p_2)\qand
g_2(p_2):=R^2(p_2)+\left(R'(p_2)\right)^2.
 \]

Since we consider small values of $\epF,\epR$ and the limiting solutions and equations when they approach zero, it is crucial to prove that the time interval of validity of our results is uniformly bounded from below, independent of the smallness of the parameters. In this article, classical solutions with at least $C^1(\mM)$ regularity are considered and we rely on energy method/estimate to achieve that regularity, and thus we will actually deal with $H^k(\mM)$ norms  for $k>2$. The coefficients in the large operator  $\cL$ varies with colatitude,  which imposes a major challenge in such endeavor. The examples in \S \ref{subs:loss} show that not every operator that is skew-self-adjoint in $L^2$ can guarantee the life span of classical solutions to be uniformly bounded from below.

We resolve this issue in the next theorem, which is the first in this kind of uniform estimates on an entire, non-flat manifold. 
\begin{theorem}\label{thm:main}
Let integer $k>2$.
Consider the rotating shallow water equations \eqref{RSW} on the spatial domain $\mM$ which is a surface of revolution parametrised by \eqref{surface:par} satisfying smoothness conditions \eqref{cond:chart}, \eqref{cond:Rm}. Suppose $\epF\le C\epR$ and the initial data is uniformly bounded in $H^k$. 

If the Coriolis parameter  $\fco\in C^k(\mM)$ satisfies $\pa_{1}\fco=0$ and, for some $\epF,\epR$-independent constants $C_\fco,C_\fco'$,
 \[
 \Big\|{(\pa_2\fco)\sqrt{g_1g_2}-C_\fco g_1g_2\over g_2}\Big\|_{H^k}\le C_\fco'\ep,
 \] then the solution $(\vu,h)$ satisfies an $\epF,\epR$-independent $H^k$ bound over a time interval $[0,T_{m}]$ that is also $\epF,\epR$-independent.
\end{theorem}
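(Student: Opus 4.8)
The plan is to run a standard symmetric-hyperbolic energy estimate in $H^k(\mM)$, but with the energy functional carefully chosen so that the singular terms $\frac1\epF\cL_\pa+\frac1\epR\cL_0$ are \emph{skew-adjoint} with respect to the associated inner product, so that they drop out of the top-order estimate. First I would set up a weighted $L^2$ inner product adapted to the metric: one writes the equations \eqref{RSW} in the coordinates $(p_1,p_2)$, notes that $\cL_\pa$ built from $\grad$ and $\dv$ is already skew-adjoint in the Riemannian $L^2(\mM)$ pairing $\langle\cdot,\cdot\rangle$ on the pair $(\vu,h)$ (integration by parts, using that $\mM$ is closed), and that $\cL_0=\fco J$ is pointwise skew-symmetric on $T\mM$ since $J$ is a rotation by $\pi/2$. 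Hence for the $L^2$ energy $\|(\vu,h)\|_{L^2}^2$ the singular terms vanish identically and the estimate closes trivially; the whole difficulty is propagating this to $H^k$, $k>2$.

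Next I would differentiate the system by spatial derivatives up to order $k$ and examine the commutators $[\Lambda^k,\cL]$ where $\Lambda$ is an order-one operator generating $H^k$. The term $[\Lambda^k,\cL_\pa]$ has coefficients involving derivatives of the metric $g_1,g_2$ and is $O(1/\epF)$ times a $k$-th order operator, which a priori is \emph{not} controlled — this is precisely the ``loss of derivatives'' phenomenon flagged in \S\ref{subs:loss}. The key structural point, and the reason the hypothesis $\pa_1\fco=0$ and the quantitative bound on $\big((\pa_2\fco)\sqrt{g_1g_2}-C_\fco g_1g_2\big)/g_2$ appear, is that these two bad commutator contributions must \emph{cancel} to leading order: the $\frac1\epF$ commutator from the pressure/divergence block and the $\frac1\epR$ commutator from the Coriolis block combine, under the assumption $\epF\le C\epR$, into a single commutator whose symbol is governed by exactly that combination of $\pa_2\fco$ and the metric coefficients. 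When that combination equals $C_\fco g_1g_2/\sqrt{g_1g_2}$ modulo an $O(\ep)$ error in $H^k$, the dangerous $1/\ep$ factor is neutralized and what survives is an $O(1)$ term absorbable by Gronwall. So the core of the proof is: (i) compute the top-order commutator explicitly in the $(p_1,p_2)$ chart; (ii) identify the cancellation structure; (iii) show the residual is $O(1)\|(\vu,h)\|_{H^k}^2$ using the hypothesis; (iv) handle lower-order terms (the quasilinear advection $\nabla_\vu\vu$, $\nabla_\vu h$, $h\dv\vu$) by the usual Moser/product estimates in $H^k(\mM)$, which cost only $O(1)$ constants.

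With the differential inequality $\frac{d}{dt}\|(\vu,h)\|_{H^k}^2 \le C\big(1+\|(\vu,h)\|_{H^k}\big)\|(\vu,h)\|_{H^k}^2$ in hand, with $C$ independent of $\epF,\epR$, a standard continuation argument yields a uniform existence time $T_m>0$ and a uniform $H^k$ bound, completing the proof. The main obstacle I anticipate is step (i)--(ii): organizing the top-order commutator computation on the curved domain so that the cancellation between the $\frac1\epF$ and $\frac1\epR$ contributions is visible. One has to be careful that the skew-symmetrizer for $\cL_\pa$ (the metric-weighted inner product) and the skew-symmetrizer for $\cL_0=\fco J$ are \emph{compatible} — i.e.\ that a single symmetric positive-definite quadratic form makes the \emph{sum} $\frac1\epF\cL_\pa+\frac1\epR\cL_0$ skew after commuting with $\Lambda^k$ up to $O(1)$ errors — and it is exactly the constraint needed for this compatibility that is encoded in the stated hypothesis on $\fco$, $g_1$, $g_2$. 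A secondary technical point is that the chart \eqref{surface:par} is singular at the poles $p_2=0,\pi$ (where $g_1\to0$), so the commutator estimates must be carried out in a way that respects the conditions \eqref{cond:chart}, \eqref{cond:Rm}, e.g.\ via a partition of unity with polar caps handled in regular coordinates.
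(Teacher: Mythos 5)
Your overall architecture (top\mbox{-}order energy estimate in $H^k$ with the singular operator arranged to be skew\mbox{-}adjoint for a suitably chosen energy, plus Moser estimates and Gronwall for the quasilinear terms) matches the paper's, but the crucial step --- your items (i)--(ii) --- is left as a hope, and the cancellation mechanism you describe cannot work as stated. For a single $k$-th order operator $\Lambda^k$, the commutator $[\Lambda^k,\cL_\pa]/\epF$ is generically of differential order $k$, while $[\Lambda^k,\cL_0]/\epR=[\Lambda^k,\fco J]/\epR$ is only of order $k-1$; terms of different order cannot cancel each other at top order, so the two singular blocks cannot ``combine into a single commutator'' as you propose. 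The paper instead builds the norm from the Laplacian (definition \eqref{def:Hk} via $\dv$ and $\curl$), for which $[\Delta,\cL_\pa]\equiv 0$ \emph{exactly} by \eqref{comm:L:pa} --- the pressure block is not where the danger lies. The sole obstruction is the first-order, $O(1/\epR)$ commutator $[\Delta,\fco J]\vu$ produced by the variable Coriolis coefficient, and Example 2 of \S\ref{subs:loss} shows that precisely this kind of term can collapse the lifespan, so it cannot be absorbed by Gronwall and must be removed structurally.

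The missing idea is Lemma \ref{lem:exact:com}: one constructs a \emph{zeroth-order, self-adjoint corrector} $\cN=\cN_{di}+\cN_{ad}$ as in \eqref{N:both}, with $a=(\epF/\epR)^2\fco^2$ and $b=(2\epF/\epR)\fco$, and runs the energy estimate with $\Delta-\cN$ in place of $\Delta$. The point is that $[\cN_{ad},\cL_\pa]/\epF$ is a first-order operator of size $O(b/\epF)=O(1/\epR)$ (see \eqref{com:N:L}), and this is the counter-term that cancels $[\Delta,\fco J]/\epR$; so the interplay between the $1/\epF$ and $1/\epR$ blocks that you sensed is real, but it occurs between the \emph{corrector's} commutator with $\cL_\pa$ and the \emph{Laplacian's} commutator with $\cL_0$, not between the commutators of one and the same $\Lambda^k$ with the two blocks --- this is also exactly where $\epF\le C\epR$ enters. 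Verifying that the cancellation is exact (when $C_\fco'=0$) or $O(\ep)$ in general is where the hypothesis on $\big((\pa_2\fco)\sqrt{g_1g_2}-C_\fco g_1g_2\big)/g_2$ is used, through the vanishing (or smallness) of the operators $\cG$ and $\cH$ in \eqref{GH}; the diagonal piece $\cN_{di}$ is then needed to cancel the zeroth-order leftovers in \eqref{comm:easy}. Without constructing $\cN$, your ``compatibility of skew-symmetrizers'' remains an unproved assertion: the hypothesis on $\fco$ by itself does not hand you the modified inner product. (Your concern about the polar chart singularity is sidestepped in the paper by working throughout with the intrinsic operators $\grad$, $\dv$, $\curl$, $\Delta$ rather than coordinate derivatives.)
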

In this article, we adopt definition \eqref{def:Hk} for the $H^k$ norm. The subscript $k$ in notation $\|\,\|_k$ is short for $H^k$.

The proof combines Lemma \ref{lem:exact:com} and standard energy estimates. When the domain $\mM$ is a perfect sphere, i.e. $R\equiv1$ in \eqref{surface:par} so that $g_1=\sin^2 p_2$ and $g_2=1$, the standard geophysical model (\cite{White:etal}) ensures that $\fco=\cos p_2$ which validates the assumption on $\fco$ in the theorem with $C_\fco=-1$ and $C_\fco'=0$. 
When the domain is not a perfect sphere, the approximation argument made in  \cite{White:etal} no longer has a clear generalisation. Of course, it is still reasonable to assume that if $\mM$ is an $O(\ep)$ perturbation of a perfect sphere, then $\fco$ is an $O(\ep)$ perturbation of $\cos p_2$, which  validates the assumption in the above theorem.

\begin{corollary}
Under the same assumptions as Theorem \ref{thm:main} with $C_\fco'\equiv0$,  the operator exponential $e^{t\cL}$ is well-defined for any $t\in\mR$ and is bounded mapping from $H^k$ to $H^k$. Moreover, let
\[
{V_{\epR,\epF}}:=e^{t\cL}\Vcol(t,\cdot).
\]
and fix $\dfrac{\epF}{\epR}= \mu>0$. Then, as $\epF\to0$, the transformed solution $V_{\epR,\epF}$ tends strongly in
$C([0,T_m];H^{k'}(\mM))$ space with $k'<k$  to $\bar V$ that  solves the following  equation with the same initial data as $V_{\epR,\epF}$
\begin{align}\label{eq:barV}
&\pt \bar V+\bar B(\bar V)=0, \\
\label{def:barB}&\bar B(\bar V(t,x)):=\lim_{\ell\to\infty}{1\over \ell}\int_0^\ell e^{s(\cL_\pa+\mu\cL_0)}{B}\left[e^{-s(\cL_\pa+\mu\cL_0)}\bar V(t,x)\right]\, ds  
\end{align}
where $B\left[\Vcol\right]:=\begin{pmatrix}\nabla_\vu \vu\\\nabla_\vu h+h\dv \vu\end{pmatrix}$ comes from the original PDE \eqref{RSW}.
\end{corollary}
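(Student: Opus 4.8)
The plan is to deploy the classical singular-limit machinery of Schochet (fast singular limits via the "filtering" transformation) on top of the uniform bound already supplied by Theorem~\ref{thm:main}. The argument splits into three parts: (i) well-posedness and boundedness of the group $e^{t\cL}$ on $H^k$; (ii) existence of the time-averaged limit operator $\bar B$; (iii) the strong convergence $V_{\epR,\epF}\to\bar V$ in $C([0,T_m];H^{k'})$ with $k'<k$.

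For (i), write $\cL = \frac1{\epF}(\cL_\pa + \mu\cL_0)$ using $\epF/\epR = \mu$. The operator $\cL_\pa + \mu\cL_0$ is skew-self-adjoint on $L^2(\mM)$ (with respect to the natural inner product weighted by $\sqrt{g_1 g_2}$ and the pairing $\langle\vu,\vv\rangle + \langle h,h'\rangle$), since $\grad$ and $-\dv$ are mutual adjoints and $J$ is pointwise skew. By Stone's theorem it generates a unitary group on $L^2$. The $H^k$-boundedness does not follow from skew-adjointness alone — this is exactly the subtlety flagged in \S\ref{subs:loss} — but under the hypothesis $C_\fco'\equiv 0$ the commutator structure exploited in Lemma~\ref{lem:exact:com} shows that the $H^k$ energy of $e^{t\cL}V_0$ grows at worst linearly (in fact is conserved up to a bounded correction), so $e^{t\cL}\colon H^k\to H^k$ is bounded uniformly on compact $t$-intervals, with a bound independent of $\epF,\epR$. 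I would state this as a direct consequence of the same commutator identity that drives Theorem~\ref{thm:main}, applied now to the linear flow.

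For (ii) and (iii), introduce $V_{\epR,\epF}(t) = e^{t\cL}\binom{\vu}{h}(t)$. Differentiating and using \eqref{RSW} in the form $\pt\binom{\vu}{h} + B[\binom{\vu}{h}] = -\cL\binom{\vu}{h}$ gives the filtered equation
\[
\pt V_{\epR,\epF} + e^{t\cL}\,B\!\left[e^{-t\cL}V_{\epR,\epF}\right] = 0,
\]
with $e^{\pm t\cL} = e^{\pm (t/\epF)(\cL_\pa+\mu\cL_0)}$, i.e.\ the nonlinearity is evaluated at the fast time $s = t/\epF$. Theorem~\ref{thm:main} furnishes a uniform $H^k$ bound for $V_{\epR,\epF}$ on $[0,T_m]$ (the transformation is an $H^k$-isometry up to the uniform constant from part (i)), hence by the equation a uniform bound on $\pt V_{\epR,\epF}$ in $H^{k-1}$. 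Aubin--Lions then yields a subsequence converging strongly in $C([0,T_m];H^{k'})$ for any $k'<k$. To identify the limit, one shows the fast-oscillatory average converges: because $\cL_\pa+\mu\cL_0$ has pure point spectrum on each eigenspace-decomposition of $L^2(\mM)$ (it is a skew-adjoint operator with compact resolvent on the closed manifold $\mM$), the almost-periodic function $s\mapsto e^{s(\cL_\pa+\mu\cL_0)}B[e^{-s(\cL_\pa+\mu\cL_0)}W]$ has a well-defined Cesàro mean $\bar B(W)$ for each fixed $W\in H^k$, and this mean is the resonant part of $B$ — the projection of $B$ onto the joint-resonance set of the eigenvalue triples. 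Passing to the limit in the filtered equation, using the strong convergence to handle the quadratic nonlinearity and the averaging lemma to replace the fast average of $B$ by $\bar B$, produces \eqref{eq:barV}--\eqref{def:barB}. Finally, well-posedness of the limit equation \eqref{eq:barV} (the averaged nonlinearity $\bar B$ inherits the same boundedness and quasilinear-symmetric structure as $B$, so it has a unique $H^k$ solution on $[0,T_m]$) forces the whole family, not just a subsequence, to converge, and upgrades weak-$*$ convergence to strong via the energy-norm continuity.

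The main obstacle is step (ii): establishing that the Cesàro average defining $\bar B$ genuinely exists and that one may commute the limit $\ell\to\infty$ with the limit $\epF\to0$. On a flat torus this is transparent because the fast flow is given by explicit Fourier multipliers with rationally/irrationally related frequencies; here the eigenvalues of $\cL_\pa+\mu\cL_0$ on the surface of revolution $\mM$ are the (generally irrational, $\mu$-dependent) frequencies of the Poincaré--Laplace–type operator, known only abstractly. One must argue, purely from skew-adjointness and compact resolvent, that the relevant trilinear interaction coefficients are summable and that the non-resonant contributions are $O(\epF)$ after integrating the fast oscillation in time (a stationary-phase / summation-by-parts argument against the eigenvalue gaps). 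Controlling this sum uniformly — i.e.\ showing the "small-divisor" denominators $\lambda_j - \lambda_k - \lambda_\ell$ that are nonzero are bounded below well enough on the relevant frequency range, or else are harmless after one integration by parts in $s$ thanks to the uniform $H^k$ regularity — is the technical heart of the argument and the place where the non-flat geometry genuinely complicates the standard proof.
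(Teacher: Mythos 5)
Your overall architecture (filtering transformation, uniform bounds from Theorem \ref{thm:main}, compactness, identification of the limit via the Ces\`aro mean, uniqueness of the limit equation) matches the paper's, which follows Schochet's construction in \cite{Sch:jde}. However, two points in your write-up are genuine gaps. First, in part (i) you conclude only that $e^{t\cL}:H^k\to H^k$ is ``bounded uniformly on compact $t$-intervals.'' That is not enough: since $e^{t\cL}=e^{(t/\epF)(\cL_\pa+\mu\cL_0)}$, the group parameter $s=t/\epF$ sweeps the unbounded interval $[0,T_m/\epF]$ as $\epF\to0$, so any bound that grows in $t$ (even linearly) destroys the argument. What is actually needed, and what the paper proves, is a bound uniform over \emph{all} $t\in\mR$. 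With $C_\fco'\equiv0$ Lemma \ref{lem:exact:com} gives the \emph{exact} commutation $[\Delta-\cN,\cL]=0$, so $\cL$ and the self-adjoint elliptic operator $\Delta-\cN$ admit a simultaneous eigenbasis; normalizing that basis in $H^k$ makes $e^{t\cL}$ diagonal with unimodular multipliers, i.e.\ an isometry in an equivalent $H^k$ norm for every $t$. Your parenthetical ``is conserved up to a bounded correction'' gestures at this, but the argument needs to be the eigenbasis/isometry one, not a Gronwall-type energy estimate on compact intervals.

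Second, the ``main obstacle'' you identify --- small-divisor lower bounds on $\lambda_j-\lambda_k-\lambda_\ell$ to establish existence of the Ces\`aro mean and to justify the averaging --- is not how the proof goes, and the route you sketch would likely be infeasible (no such lower bounds are available for eigenvalues of variable-coefficient operators on a general surface of revolution). Since $\cL_\pa+\mu\cL_0$ has purely imaginary point spectrum in the simultaneous eigenbasis, $s\mapsto e^{-s(\cL_\pa+\mu\cL_0)}W$ is an almost periodic $H^k$-valued function; composing with the continuous quadratic map $B$ and with the (strongly almost periodic) group again yields an almost periodic $H^{k-1}$-valued function, and Bochner--Besicovitch theory guarantees that every Banach-space-valued almost periodic function has a mean value. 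No resonance analysis is required for existence of $\bar B$, and the uniformity needed to pass to the limit in the filtered equation is handled by approximating almost periodic functions by trigonometric polynomials (the Krylov--Bogoliubov--Mitropolsky step), again without quantitative control of the divisors. As written, your proposal leaves its self-declared ``technical heart'' unresolved, whereas the classical almost-periodicity argument closes it immediately.
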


\begin{proof}
By Lemma \ref{lem:exact:com}, $(\Delta-\cN)$ and $\cL$ commute with  $\cN$ being a first order self-adjoint differential operator.
By classical spectral theory of linear operators on Hilbert space (e.g. \cite{Taylor:II}), the eigenfunctions of $\Delta$ can form
a complete basis of $L^2(\mM)$. Since $\Delta-\cN$ is just a lower order perturbation, it also has eigensfunctions that  form a complete basis of $L^2(\mM)$. The commutability of $(\Delta-\cN)$ and $\cL$ together with the skew-self-adjointness of $\cL$ implies that $\cL$ share the same eigenfunctions with zero or pure imaginary eigenvalues.

Further, because $H^k(\mM)$ and $(-\Delta+\cN)^{-{k\over2}} L^2(\mM)$ are the same space and because spectral theory guarantees the above eigenfunctions are smooth,  they also form a complete basis of $H^k(\mM)$.  Upon the correct normalisation in the respective $H^k(\mM)$ norm, they form an orthonormal basis. 

 This means we can define operator exponential $e^{t\cL}$ as bounded mapping on $L^2(\mM)$ and any $H^k(\mM)$ using the same eigenfunction expansion. 
 
Recall $\cL$  only has zero or pure imaginary eigenvalues. Therefore,  
\be\label{def:W}
W(s,t)=e^{s(\cL_\pa+\mu\cL_0)}\begin{pmatrix}\vu(t,x)\\h(t,x)\end{pmatrix}
\ee 
is an almost periodic function in the $s$ variable  when $W$ is considered as a mapping from the pair $(s,t)$ to  points in $H^k(\mM)$. The classical theory  of almost periodic functions (e.g. \cite{Bes:AP})  guarantees that the integrand on the right hand side of \eqref{def:barB} is almost periodic in the $s$ variable and therefore the limit therein exists. Then Schochet's construction of proof in \cite{Sch:jde} for singular limits of PDEs on $\mT^n$ or $\mR^n$ spatial domain can be adopted here. In particular, 
\begin{enumerate}[(i)]
\item the limit of $V_{\epR,\epF}=\bar V$ exists strongly in $C([0,T_m], H^{k'}(\mM))$ ($k'<k$) by compactness argument, up to choosing a subsequence (but see (\ref{uniquelimit}) below);
\item the time integral of the transformed original PDE
\[
 \int_0^TV_{\epR,\epF}(t)\,dt=\int_0^T  e^{{t\over\epF}\cL_\pa+{t\over\epR}\cL_0}{B}\left[e^{-{t\over\epF}\cL_\pa -{t\over\epR}\cL_0}V_{\epR,\epF}\right]\,dt \qfor T\in(0,T_m]
\] can be approximated as
\[
 \int_0^TV_{\epR,\epF}(t)\,dt=\int_0^T  e^{{t\over\epF}(\cL_\pa+\mu\cL_0)}{B}\left[e^{-{t\over\epF}(\cL_\pa+\mu\cL_0)}
 \bar V(t) \right]\,dt+o(1)T 
\]
and then, by the Krylov-Bogoliubov-Mitropolsky averaging method (\cite{BM:ave}),
satisfies the following limit strongly in $H^{k-1}(\mM)$, up to choosing a subsequence (but see (\ref{uniquelimit}) below),
\[\lim_{\epF\to0}\int_0^TV_{\epR,\epF}(t)\,dt=\int_0^T \left(\lim_{\ell\to\infty}{1\over \ell}\int_0^\ell  e^{s(\cL_\pa+\mu\cL_0)}{B}\left[e^{-s(\cL_\pa+\mu\cL_0)}\bar V(t)\right]\,ds\right)\,dt;
\]
\item \label{uniquelimit} the strong limit of any subsequence of $ V_{\epR,\epF}$ must satisfy \eqref{eq:barV}, \eqref{def:barB} which have a unique solution with given initial data $\bar V_0=(\vu_0,h_0)$ and therefore strong limit $\displaystyle\lim_{\epF\to0}V_{\epR,\epF}$ exists uniquely.
\end{enumerate}
\end{proof}

The $\beta$-effect i.e. variation of Coriolis parameter $\fco$ is most prominent about the equator, and upon linearisation, it is approximately proportional to the signed distance of the point to the equator.
The results in \cite{DM:limit:2006, DM:ave:2007, GS:2006, DSM:simple:2009} adopt such linearisation, set the domain in flat 2D space which {\it extends to infinity} along the north-south direction. The slow subspace of solutions contains zonal flows which coincide with Theorem \ref{thm:TA} from below. 
Also note that in
\cite{DM:ave:2007}, \cite{GS:2006}, solution space is expanded using Hermite functions which are essentially Gaussian functions times polynomials.

A straightforward framework is introduced in \cite{ChMa:TA} for proving that the time-average of the solution stays close to the null space of the large linear operator. 
An application of this  framework can be found in \cite{ChMa:Euler:sphere} for two-dimensional {\it incompressible} Euler equations on a fast rotating sphere -- whereas \eqref{RSW} is a special case of two-dimensional {\it compressible} Euler equations. Another application is in \cite{CM19} in the domain of a thin spherical shell. 

Our next theorem is a direct consequence of combining Lemma \ref{null:cM} with  time-averaging of \eqref{RSW} and  uniform estimates of Theorem \ref{thm:main}.

\begin{theorem}\label{thm:TA}Under the same assumptions as Theorem \ref{thm:main} with $C_\fco'\equiv0$,  
 for any fixed $T\in(0,T_m]$, there exists a function $\Phi:\mM\mapsto\mR$ which is  independent of $p_1$, such that  
\be\label{estimate:RSWthm}\nonumber\begin{split}&\Big\|{1\over T}\int_0^T \vu \,dt-J\grad  \Phi(p_2)\Big\|_{{k-3}}
+\Big\|{1\over T}\int_0^T {\epR\over\epF} h \,dt-\Psi(p_2)\Big\|_{{k-2} }\\\le& C \ep({2M\over T}+M^2).\end{split}\ee
where   constant $M:=\displaystyle\max_{t\in[0,T]}\|(\vu,h)\|_{k}$ and $\Psi(p_2)$ is uniquely determined by $\Psi'(p_2)=\fco(p_2)\Phi'(p_2)$ and $\int_{0}^\pi g_1g_2 \Psi \,dp_2=0$. 
 \end{theorem}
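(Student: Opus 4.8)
The plan is to time-average the two equations of \eqref{RSW} over $[0,T]$, rescale the first by $\epR$ and the second by $\epF$ so that the singular prefactors become bounded, and then confront the resulting ``near-kernel'' bounds with the description of $\nllL$ from Lemma \ref{null:cM}. Write $\overline{\vu}:=\tfrac1T\int_0^T\vu\,dt$ and $\overline{h}:=\tfrac1T\int_0^T h\,dt$. Averaging \eqref{RSW:u}, so that $\pt\vu$ leaves only $\tfrac1T(\vu(T)-\vu(0))$, and multiplying by $\epR$ gives
\[
\Big\|\tfrac{\epR}{\epF}\grad\overline{h}+\fco\,J\overline{\vu}\Big\|_{k-1}
=\epR\,\Big\|\tfrac{\vu(T)-\vu(0)}{T}+\tfrac1T\!\int_0^T\!\nabla_\vu\vu\,dt\Big\|_{k-1}
\le C\ep\Big(\tfrac{2M}{T}+M^2\Big),
\]
using that $H^{k-1}$ is an algebra ($k>2$), that $\vu\mapsto\nabla_\vu\vu$ maps $H^k$ to $H^{k-1}$ quadratically, the uniform bound $M$ from Theorem \ref{thm:main}, and $\epF\le C\epR=C\ep$. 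Averaging \eqref{RSW:h} and multiplying by $\epF$ gives likewise $\|\dv\overline{\vu}\|_{k-1}\le C\ep(\tfrac{2M}{T}+M^2)$; moreover $\overline{h}$ has zero global mean since $h$ does at every time. These two inequalities are the quantitative form of ``the time average is nearly annihilated by $\cL$'', and everything below is extracted from them, crucially without invoking any uniform invertibility of $\cL$.

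Next I would recover the zonal structure. Using Hodge theory on the simply-connected $\mM$ (no harmonic $1$-forms), decompose $\overline{\vu}=J\grad\psi+\grad\chi$; since $\Delta\chi=\dv\overline{\vu}$, elliptic regularity gives $\|\grad\chi\|_{k}\le C\ep(\tfrac{2M}{T}+M^2)$, so $\overline{\vu}$ agrees with the divergence-free field $J\grad\psi$ up to an $H^k$-small error. Apply $\curl$ to the displayed estimate: as $\curl\grad(\tfrac{\epR}{\epF}\overline{h})=0$ and $\curl(\fco\,J\overline{\vu})=\fco\,\dv\overline{\vu}+(\grad\fco)\cdot\overline{\vu}$ (modulo the orientation convention for $J$), the divergence term is absorbed and we get $\|(\grad\fco)\cdot\overline{\vu}\|_{k-2}\le C\ep(\tfrac{2M}{T}+M^2)$. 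Here the hypothesis with $C_\fco'=0$ enters decisively: it reads $\pa_2\fco=C_\fco\sqrt{g_1g_2}$, with $C_\fco\neq0$ because $\fco$ equals $\pm1$ at the two poles, so a direct computation in the $(p_1,p_2)$-chart gives $(\grad\fco)\cdot(J\grad\psi)=\pm C_\fco\,\pa_1\psi$ \emph{with the polar factor $1/\sqrt{g_1g_2}$ carried by $J\grad\psi$ cancelled exactly}. Combining with the $H^k$-smallness of $\overline{\vu}-J\grad\psi$ yields $\|\pa_1\psi\|_{k-2}\le C\ep(\tfrac{2M}{T}+M^2)$, and since $\pa_1$ is a Killing field commuting with $\Delta$, the Poincaré inequality in $p_1$ gives $\|\psi-\Phi\|_{k-2}\le C\ep(\tfrac{2M}{T}+M^2)$ where $\Phi(p_2):=\tfrac1{2\pi}\int_0^{2\pi}\psi\,dp_1$ is $p_1$-independent. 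Applying $\grad$ (costing one derivative) and adding back $\grad\chi$ yields the first claimed bound $\|\overline{\vu}-J\grad\Phi\|_{k-3}\le C\ep(\tfrac{2M}{T}+M^2)$.

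For the height field I would return to the displayed estimate, now knowing $\overline{\vu}=J\grad\Phi$ up to an $H^{k-3}$-error of size $C\ep(\tfrac{2M}{T}+M^2)$; using $J(J\grad\Phi)=-\grad\Phi$, it becomes
\[
\Big\|\grad\big(\tfrac{\epR}{\epF}\overline{h}\big)-\fco\,\grad\Phi\Big\|_{k-3}\le C\ep\Big(\tfrac{2M}{T}+M^2\Big).
\]
Since $\Phi=\Phi(p_2)$, so is $\fco\,\grad\Phi$, and by Lemma \ref{null:cM} the unique $p_1$-independent $\Psi$ with $\Psi'=\fco\,\Phi'$ (normalized by $\int_0^\pi g_1g_2\Psi\,dp_2=0$) satisfies $\grad\Psi=\fco\,\grad\Phi$; hence $\grad\big(\tfrac{\epR}{\epF}\overline{h}-\Psi\big)$ has size $C\ep(\tfrac{2M}{T}+M^2)$ in $H^{k-3}$. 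Both $\tfrac{\epR}{\epF}\overline{h}$ (from the zero global mean of $h$) and $\Psi$ (by its normalization, consistent with \eqref{h:global:mean}) have zero mean on $\mM$, so Poincaré on the closed surface upgrades this to $\|\tfrac{\epR}{\epF}\overline{h}-\Psi\|_{k-2}\le C\ep(\tfrac{2M}{T}+M^2)$. Adding the two bounds proves the theorem.

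The one genuine obstacle is the conflict between the degeneracy of the $(p_1,p_2)$-parametrisation at the poles and the vanishing of $\grad\fco$ there: ``$(\grad\fco)\cdot\overline{\vu}$ small'' cannot be turned into ``$\overline{\vu}$ nearly $p_1$-independent'' by dividing through by $\pa_2\fco$, since that quotient is singular. The escape is precisely the structural identity forced by $C_\fco'=0$: with $\pa_2\fco=C_\fco\sqrt{g_1g_2}$ matching the compensating $1/\sqrt{g_1g_2}$ weight in $J\grad\psi$, one gets $(\grad\fco)\cdot(J\grad\psi)=\pm C_\fco\,\pa_1\psi$ with nothing singular left over, so the polar obstruction becomes an exact cancellation. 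Everything else — the Sobolev losses (down to $H^{k-3}$ for $\vu$ and $H^{k-2}$ for $h$, arising from the nonlinearity, the $\curl$, and the gradients) together with the elliptic and Poincaré estimates on the closed surface $\mM$ — is routine within the framework already built for Theorem \ref{thm:main} and in \cite{ChMa:TA}.
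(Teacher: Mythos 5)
Your proposal is correct and follows essentially the same route as the paper: time-average \eqref{RSW}, rescale by $\ep$ to bound $\cL$ applied to the averages, and then convert near-membership in $\nllL$ into closeness to a zonal state. The chain you build inline --- Hodge decomposition, the exact cancellation $(\grad\fco)\cdot(J\grad\psi)=C_\fco\,\pa_1\psi$ under $\pa_2\fco=C_\fco\sqrt{g_1g_2}$, Poincar\'e in $p_1$, and Poincar\'e for the mean-zero height --- is precisely the content of Lemma \ref{null:cM}(\ref{N3}) and Lemma \ref{est:inc}, which the paper invokes rather than re-derives.
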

 
 Thus, not only $ {1\over T}\int_0^T \vu \,dt$ can be approximated by a {\bf longitude-independent zonal flow}, but also $ {1\over T}\int_0^Th \,dt$ can be approximated by a longitude-independent height field.
 
  This theoretical result is consistent with many numerical studies and observations.  
For a partial list of computational results, we mention \cite{galperinnakanoetal2004} for 3D models, \cite{vallismaltrud1993, nozawayoden1997} for 2D models, and references therein. Note that many of these computations attempt to simulate turbulent flows with sufficiently high resolutions. Zonal structures in these numerical results are either directly noticeable by naked eyes or after some time-averaging procedures.
On the other hand, we have observed  zonal flow patterns (e.g. bands and jets) on giant planets for hundreds of years, which has attracted considerable interests recently thanks to spacecraft missions and the launch of the Hubble Space Telescope (e.g. \cite{garciasanchez2001}).  There are also observational data in the oceans on Earth showing persistent zonal flow patterns (e.g. \cite{maximenkobangetal2005}).
 
\subsection{Loss of uniform estimates on {\it higher} norms}\label{subs:loss}
The large operator \eqref{def:cM} is skew-self-adjoint in  $L^2$ inner product, but not in terms of higher derivatives, due to the variable coefficient $\fco$ in $\cL_0$. This can potentially prevent us from proving an uniform lower bound on the life span of classical solutions. We give two examples of first order hyperbolic PDEs with variable coefficients in the large operators: the essential difference being Example 1 has first order derivative in the operator  and Example 2 had zero-th order in the operator which is the same as the rotation operator of \eqref{RSW}.  
\medskip

\noindent {\bf Example 1}. This is a two-dimensional shear flow in a two-dimensional torus, spatial domain. The unknown $v(t,x,y)$ satisfies, 
\[
v_t+vv_y+{\sin(y)\over\ep}v_x=0
\]
with initial datum satisfying 
\[
v(0,0,0)=0,\quad v_x(0,0,0)=1\qand v_y(0,0,0)=-1.
\] Clearly $\|v\|_{L^2}$ is conserved in time for classical solutions.  As long as the solution stays smooth, we have $v=0$ remains constant along the characteristic curve $(t,0,0)$.  Along the same curve, the growth rates of $v_x$ and of $v_y$ are given on the right sides below,
\[\ba
(v_x)_t+v(v_x)_y+{\sin(y)\over\ep}(v_x)_x&=-v_xv_y,\\
(v_y)_t+v(v_y)_y+{\sin(y)\over\ep}(v_y)_x&=-(v_y)^2-{\cos(y)\over\ep}v_x.
\ea\]
Thus, as $t$ increases from $0$,  $v_x$ is increasing and positive whereas $v_y$ is decreasing and negative. Therefore, the growth rate of $v_y$ is bounded from above by $-(v_y)^2-{1\over\ep}\cos 0$. Since $v_y$ starts with initial value $-1$, it will  approach $-\infty$ at a positive time no later than $O(\sqrt\ep)$. The validity time interval of classical solutions therefore shrinks to $0$ as $\ep\to0$.\hfill $\square$ 
\medskip

\noindent {\bf Example 2}. This mimics {\it variable} Coriolis force {\it without} pressure gradient. The unknowns $v(t,x,y)$, $u(t,x,y)$ satisfy\[\left\{\begin{aligned}
v_t+vv_y+{\sin(y)\over\ep}u&=0,&&\qquad v(0,0,0)=0,\;\; v_y(0,0,0)=-1,\\
u_t\phantom{+vv_y\,\,}-{\sin(y)\over\ep}v&=0,&&\qquad u(0,0,0)=1, 
\end{aligned}\right.
\]
and one can assume both are independent of $x$, although it is not essential. 
Clearly $\|v\|_{L^2}+\|u\|_{L^2}$ is conserved in time for classical solutions.  As long as the solution stays smooth, we have $v=0$ remains constant along the characteristic curve $(t,0,0)$.  Along the same curve,  the growth rates of $v_y$ are given on the right side below,
\[
(v_y)_t+v(v_y)_y=-(v_y)^2-{\cos(y)\over\ep}u--{\sin(y)\over\ep}u_y.
\]
But $u$ remains constant $1$ and $y$ remains $0$. Therefore,
$v_y(0,0,0)$ (i.e. divergence of $(u,v)$) tends to $-\infty$ in $O(\sqrt\ep)$ time.
 \hfill $\square$ 

\bigskip

The rest of this article is organised as follows. In section \ref{sec2} we find a corrector $\cN$ to the Laplacian so that $\Delta-\cN$ commutes with the large operator $\cL$. In section \ref{sec3} we characterize the kernel of $\cL$, some projection onto this kernel and show that the time averages of solutions stay close to zonal flows. In section \ref{sec:geo} we discuss geometry of the surface $\mM$ in elementary terms.

\bigskip
  
  \section{\bf Commutator}\label{sec2}
The dot product of vectors will be denoted by a dot, e.g. $\vu\cdot\vv$ (c.f. \S \ref{sec:geo} for detailed information). The $L^2(\mM)$ inner product of vector fields will be denoted by $\langle\,,\,\rangle$ namely
\[
\langle \vu,\vv\rangle=\int_\mM\vu\cdot\vv.
\]
Let sup-script ${}^*$ denote the $L^2(\mM)$-adjoint of the operator it attaches to. In fact, all occurrences of (skew)-adjointness are with respect to the $L^2(\mM)$ inner product, unless noted otherwise.

We note that the product rule for $\grad(\vu\cdot\vv)$, already complicated in flat geometry, is even more so on a surface. Thus, we avoid using it altogether here.

\subsection{Hodge decomposition}\label{sec:Hodge}

We aim to use differential geometric tools in an elementary fashion. More details are given in elementary terms in  \S \ref{sec:geo} and in particular, we know singularity caused by longitude-colatitude parametrisation is removable.

For a scalar field $h$ defined on $\mM$, we define the gradient $\nabla h$ as the result of projecting the $\mR^3$ gradient of $h$ onto the tangent bundle of $\mM$.

Since apparently $J^2=-1$ and $\langle J\vu_1,J\vu_2\rangle=\langle \vu_1,\vu_2\rangle$, we have  
\be\label{J:skew}
J^*=-J=J^{-1}.
\ee
Then, define $\dv$ to be the skew-adjoint of $\grad$  and $\curl$ to be the skew-adjoint of $J\grad$,
\be\label{eq:gradperp0}
\dv:=-\grad^*,\qquad\curl:=-(J\grad)^*=-\dv J,
\ee
both of which map vector fields to scalar fields.

The following properties then hold regardless of the geometry of $\mM$,
\begin{align}
\label{eq:gradperp1}&\dv\grad=\curl(J\grad)=\Delta,\\
\label{eq:gradperp2}&\dv(J\grad)=\curl\grad=0.
\end{align}

Scalar Laplacian and vector Laplacian are both denoted by the same symbol $\Delta$ which shall be understood as the Laplacian acting on whatever field that follows. Its action on scalar fields equals the first two expressions of \eqref{eq:gradperp1} and its action on vector fields is the well-known surface Laplacian
\be\label{def:surf:Lap}
\Delta\vu=\grad\dv\vu+J\grad\curl\vu
\ee
 Then, it is straightfoward to show $\Delta$ commutes with $\grad,\rgrad,\dv,\curl$.  Immediately,
\be\label{comm:L:pa}
[\Delta,\cL_\pa]\equiv0\quad\text{ and therefore }\quad [\Delta,\cL]\Vcol={1\over\epR}\bpm[\Delta,\fco J]\vu\\0\epm.
\ee

By the theory of Hodge decomposition,  any smooth tangent vector field on a manifold in the same cohomology class as the 2-sphere is uniquely the sum of an irrotational and an incompressible vector fields. Using (pseudo-) differential operators, this is
\be\label{Hodge}
\ba
\vu&:=\grad\Deltainv \dv\vu+J\grad\Deltainv \curl\vu\\
&=\grad\Deltainv \dv\vu+J^{-1}\grad\Deltainv \dv J\vu
\ea
\ee
The inverse Laplacian is unique up to an additional constant whose exact value is immaterial in the above expressions and throughout this article. Because of this, from now on, we assume
\[
\text{The result of }\Delta^{-1} \text{ acting on a scalar field has zero global mean.}
\]

One can show  (\cite{Taylor:I}) there exist constants $c,C$ that only depend on the domain and the value of integer $k$ so that $c\|\vu\|_k\le \|\grad\Deltainv \dv\vu\|_k+\|\rgrad\Deltainv \curl\vu\|_k\le C\|\vu\|_k$. By integrating by parts and the fact that $\dv\vu$ has zero global mean, we have $\|\grad\Deltainv \dv\vu\|_0^2\le \|\Deltainv \dv\vu\|_0\cdot\|\dv\vu\|_0\le \|\dv\vu\|_0\cdot\|\dv\vu\|_0$ and similarly on $\|\rgrad\Deltainv \curl\vu\|_0$, we then redefine vector-field norms using scalar-field norms,
\be\label{def:Hk}
\|\vu\|_k:=\left(\|\dv\vu\|_{k-1}^2+\|\curl\vu\|_{k-1}^2\right)^{1\over2}\qfor k\ge1.
\ee
 This then induces the definition of $H^k$ inner product for vector field
\[
\langle \vu,\vu' \rangle_k:= \langle \dv\vu,\dv\vu' \rangle_{k-1}+\langle \curl\vu,\curl\vu' \rangle_{k-1}\qfor k\ge1.
\]

More definitions and relevant properties can be found in \cite[Appendix A]{ChMa:Euler:sphere}.  


\subsection{Finding corrector to commutator}
In order to obtain uniform-in-$\ep$ estimate of the $H^k$ norm of the solution for a time period that is bounded from below uniformly in $\ep$,  we endeavor to find differential operators that commute with the large operator $\cL$.  Due to the useful knowledge on the scalar/vector Laplacian operators given in \S \ref{sec:Hodge},  we aim to find pseudo-differential operator $\cN$ of order less that two so that $[\cN,\cL]=[\Delta,\cL]$ (or in approximate sense). Due to the fact that $\cL$ is skew-self-adjoint and Laplacian is self-adjoint, this is equivalent to $[\cN^*,\cL]=[\Delta,\cL]$ and adding it back shows that it is equivalent to finding a self-adjoint operator $\cN$.  

 In view of \eqref{comm:L:pa}, this motivates us to analyze commutator $[\Delta,\fco J]\vu$.
First, we define
 \[
\cA\vv:=(J\grad\fco)\dv\vv\quad\text{ so that }\quad \cA^*\vv=\grad\big\{(\grad\fco)\cdot (J\vv) \big\},
\]
since $\grad^*=-\dv$ from \eqref{eq:gradperp0}. Define the symmetric part of $\cA$ times 2,
\[
\cAs:=\cA+\cA^*.
\]
For any linear operator mapping 2-vector fields onto itself, for example $\cA$, we let $\fCJ(\cA)$ denote its conjugation via rotation $J$,
\[
\fCJ(\cA):=J^{-1}\cA J
\]
Since $J^{-1}=-J=J^*$, we have
\begin{subequations}\label{prop:conj}
\begin{align}
\label{conj1}\fCJ^2(\cA)&=\cA,\\
\label{conj3}\big(\fCJ(\cA) \big)^*&=\fCJ(\cA^*),
\end{align}
\end{subequations}

\begin{lemma}\label{lem:De:fJ}
\[
[\Delta,\fco J]\vu=(1+\fCJ)\big(\cAs  \big)\vu.
\]
\end{lemma}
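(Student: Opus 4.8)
The plan is to expand $[\Delta,\fco J]\vu = \Delta(\fco J\vu) - \fco J\Delta\vu$ using the surface Laplacian formula \eqref{def:surf:Lap}, $\Delta\vv = \grad\dv\vv + J\grad\curl\vv$, and then match terms. Since $\Delta$ commutes with $\grad,\rgrad,\dv,\curl$ (as noted after \eqref{def:surf:Lap}), the difficulty is entirely in commuting the \emph{scalar} multiplication by $\fco$ past first-order operators. The natural route is to compute $\dv(\fco J\vu)$ and $\curl(\fco J\vu)$ via the product rule for $\dv$ and $\curl$ applied to a scalar times a vector field, i.e.\ $\dv(\fco\vv) = \fco\dv\vv + (\grad\fco)\cdot\vv$ and $\curl(\fco\vv) = \fco\curl\vv + (\grad\fco)\cdot(J\vv)$ (equivalently $-(J\grad\fco)\cdot\vv$). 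I would then feed these back into \eqref{def:surf:Lap} for $\Delta(\fco J\vu)$ and carefully collect the terms.

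First I would write $\vv := J\vu$, so that $\Delta(\fco\vv) = \grad\dv(\fco\vv) + J\grad\curl(\fco\vv)$. Using the two product rules, $\dv(\fco\vv) = \fco\,\dv\vv + (\grad\fco)\cdot\vv$ and $\curl(\fco\vv) = \fco\,\curl\vv + (\grad\fco)\cdot(J\vv)$. Applying $\grad$ and $J\grad$ respectively and using the product rule once more for $\grad(\fco\cdot\text{scalar})$, the "$\fco\,\dv\vv$" and "$\fco\,\curl\vv$" pieces will, after commuting $\grad$ through, reassemble into $\fco(\grad\dv\vv + J\grad\curl\vv) = \fco\Delta\vv = \fco J\Delta\vu$ plus lower-order correction terms of the form $(\grad\fco)\,\dv\vv$ and $(\grad\fco)\,\curl\vv$-type expressions. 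Subtracting $\fco J\Delta\vu$ then leaves a first-order operator applied to $\vu$. The remaining bookkeeping is to recognize that the surviving terms organize into $\grad\{(\grad\fco)\cdot(J\vu)\} + (J\grad\fco)\dv\vu$ — which is exactly $\cA^*\vu + \cA\vu = \cAs\vu$ in the notation introduced just before the lemma — together with a companion batch of terms obtained from these by conjugating with $J$, i.e.\ $\fCJ(\cAs)\vu$. Hence the total is $(1+\fCJ)(\cAs)\vu$.

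The main obstacle will be the sign and $J$-bookkeeping: there are several places where one must use $J^2=-1$, $J^*=-J=J^{-1}$, and the relation $\curl = -\dv J$ from \eqref{eq:gradperp0}, and it is easy to lose a factor of $J$ or a sign when deciding which leftover term belongs to $\cAs\vu$ directly and which belongs to $\fCJ(\cAs)\vu = J^{-1}\cAs J\vu$. A clean way to control this is to exploit the symmetry relations \eqref{prop:conj}: once I have isolated $[\Delta,\fco J]\vu$ as a sum of explicit first-order terms, I can verify by inspection that the operator is self-adjoint (since $\Delta$ is self-adjoint and $\fco J$ has adjoint $-J\fco$, one checks $[\Delta,\fco J]^* = [\Delta, -J\fco]$, and then uses $J\fco J^{-1}$ relations), which forces the decomposition into the symmetric combination $(1+\fCJ)(\cAs)$ rather than some unsymmetrized variant. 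Concretely, I expect to show that the "half" of the commutator not already in the form $\cAs\vu$ is precisely $\fCJ(\cAs\vu)$ by writing that half as $J^{-1}(\text{something})J$ and identifying the something with $\cAs$; the identity $(\fCJ(\cA))^* = \fCJ(\cA^*)$ guarantees $\fCJ(\cAs)$ is itself symmetric, so the final expression $(1+\fCJ)(\cAs)$ is manifestly self-adjoint, consistent with the structural check above.
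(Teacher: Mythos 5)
Your plan is essentially the paper's proof: the paper likewise expands the commutator through $\Delta=\grad\dv+J\grad\curl$ and the product rules, but organizes the bookkeeping from the outset by noting that $J\grad\curl=\fCJ(\grad\dv)$ and that $\fCJ$ commutes with forming the commutator against $\fco J$ (scalars commute with $J$), so that $[\Delta,\fco J]=(1+\fCJ)\big([\grad\dv,\fco J]\big)$ and only one product-rule computation is needed; the terms there beyond $\cAs\vu$ come out in the form $(\fCJ-1)X$ and are annihilated by $(1+\fCJ)$ since $\fCJ^2=1$. One concrete correction to your auxiliary identity: since $\curl=-\dv J$, the product rule is $\curl(\fco\vv)=\fco\curl\vv-(\grad\fco)\cdot(J\vv)=\fco\curl\vv+(J\grad\fco)\cdot\vv$, opposite in sign to what you wrote; used verbatim, your version flips the $\fCJ(\cA^*)$ contribution and produces a non-self-adjoint (hence wrong) operator --- precisely the kind of slip that the self-adjointness check you propose at the end would catch, so do run that check.
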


\begin{proof}
By the second line of Hodge decomposition \eqref{Hodge}, the definition of $\fCJ$ and the apparent fact that scalar multiplication commutes with $J$,
\be\label{pr:comm:1}
\ba{}
[\Delta,\fco J]\vu&=[\grad\dv,\fco J]\vu+ \big[\fCJ(\grad\dv),\fco J\big]\vu\\
&=[\grad\dv,\fco J]\vu+\fCJ\big( [\grad\dv,\fco J]\big)\vu.
\ea
\ee
 
Using Calculus rules on Riemannian manifold we show
\[
\ba{}
[\grad\dv,\fco J]\vu&=\grad\dv(\fco J\vu)-\fco J\grad \dv \vu\\
&=\grad\Big\{(\grad\fco)\cdot(J\vu) +  \fco\dv(J\vu) \Big\}-J\Big\{\grad\big(\fco\dv\vu\big)-(\grad\fco)\,\dv\vu\Big\}\\
& =\grad\Big\{(\grad\fco)\cdot(J\vu)\Big\}+(J\grad\fco)\,\dv\vu-J\grad\big(\fco\dv\vu\big) + \grad\big(\fco\dv(J\vu) \big) \\
& =\cA^*\vu +\cA\vu-J\grad\big(\fco\dv\vu\big) + \fCJ\left(J\grad\big(\fco\dv\big) \right)\vu.
\ea
\] 
Combining this with \eqref{conj1}, we carry on from \eqref{pr:comm:1} and complete the proof.
\end{proof}

The possible candidate for the corrector $\cN$ is chosen as follows. For scalar functions $a, b$,  define zero-th order, self-adjoint operators,
\be\label{N:both}
\ba
\cN_{di}\Vcol:=\bpm a\,\vu\\a\, h\epm,&\qquad \cN_{ad}\Vcol:=\bpm \big(J\grad b\big)h\\\big(J\grad b\big)\ccdot\vu \epm,
\ea
\ee
where the subscrpit ``$di$'' indicates diagonal and ``$ad$'' anti-diagonal. They are not the most general choices, but will suffice our purpose of finding at least one commutator. 
Apparently
\be\label{comm:easy}
 [\cN_{di},\cL_0]\equiv0,\quad 
[\cN_{di},\cL_\pa]\Vcol=-\bpm h\grad a\\\vu\cn a\epm\qand 
[\cN_{ad},\cL_0]\Vcol=\bpm \fco h\grad b\\\fco\vu\cn b\epm.
\ee

Recall we look to satisfy requirement $[\cN,\cL]=[\Delta,\cL]$ (exactly or approximately) where the nontrivial term of the right hand side is essentially $[\Delta,\fco J]\vu$, which according to Lemma \ref{lem:De:fJ},  includes first order spatial derivatives of $\vu$.  Since all three commutators in \eqref{comm:easy} contains only zero-th order derivatives, the first derivatives in $[\Delta,\fco J]\vu$ can only be balanced by
\be\label{com:N:L}\ba{}
[\cN_{ad},\cL_\pa]\Vcol&=
\bpm (J\grad b)(\dv\vu)\\(J\grad b)\cdot\grad h\epm-\bpm \grad\big\{(J\grad b)\cdot\vu\big\}\\\dv\big((J\grad b) h\big)\epm\\
&=\bpm (J\grad b)(\dv\vu) +\grad\big\{(\grad b)\cdot(J\vu)\big\}\\0\epm
\stackrel{def}{=}\bpm{\cA^s_b(\vu)}\\0\epm,
\ea\ee
where we used $(J\vv_1)\cdot\vv_2=-\vv_1\cdot(J\vv_2)$. Also the last $0$ results from combining the product rule and $\dv(J\grad b)\equiv0$. Thus,
the velocity component $\cA^s_b(\vu)$ is defined the same way as $\cAs(\vu)$ only with $\fco$ replaced by $b$. But simply choosing $b$ as some constant times $\fco$ can not exactly balance the $[\Delta,\fco J]\vu$ term as Lemma \ref{lem:De:fJ} reveals. Further computation is needed.


Let $\vu=\grad\sigma_1+J\grad\sigma_2$ and $\ti\vu=\grad\ti\sigma_1+J\grad\ti\sigma_2$ with the later being the ``test function''. Then, 
\[
\Big\langle\ti\vu,\cAs(\vu)\Big\rangle=\Big\langle\grad \ti\sigma_1,\cAs(\grad \sigma_1)\Big\rangle+\Big\langle \grad \ti\sigma_2, \fCJ(\cAs)(\grad \sigma_2)\Big\rangle
+\Big\langle \grad \ti\sigma_1,\cAs(J\grad \sigma_2)\Big\rangle+\Big\langle \grad \ti\sigma_2,J^{-1}\cAs(\grad \sigma_1)\Big\rangle.
\]
By similar calculation, replacing $\cAs$ with $\fCJ(\cAs)$ and noting \eqref{conj1}, we have
\[
\Big\langle\ti\vu,\fCJ(\cAs)(\vu)\Big\rangle=\Big\langle \grad \ti\sigma_1,\fCJ(\cAs)(\grad \sigma_1)\Big\rangle+\Big\langle\grad \ti\sigma_2,\cAs(\grad \sigma_2)\Big\rangle
+\Big\langle \grad \ti\sigma_1,J\cAs(\grad \sigma_2)\Big\rangle+\Big\langle \grad \ti\sigma_2,\cAs(J^{-1}\grad \sigma_1)\Big\rangle.
\]
By  $\Big\langle \grad \ti\sigma,\fCJ(\cAs)(\grad \sigma)\Big\rangle=\Big\langle J\grad \ti\sigma,\cA(J\grad \sigma)+\cA^*(J\grad \sigma)\Big\rangle$ and the definitional fact $\cA(J\grad )=0$, we have $\Big\langle \grad \ti\sigma_i,\fCJ(\cAs)(\grad \sigma_i)\Big\rangle=0$ ($i=1,2$). Also, $\cA(J\grad)=0$ allows us to cancel parts of the  cross terms namely those products involving a ``1'' sub-script and a ``2'' sub-script. Therefore
\begin{align*}
\Big\langle\ti\vu,\cAs(\vu)\Big\rangle&=\Big\langle\grad\ti \sigma_1,\cAs(\grad \sigma_1)\Big\rangle
+\Big\langle \grad \ti\sigma_1,\cA^*(J\grad \sigma_2)\Big\rangle+\Big\langle \grad \ti\sigma_2,J^{-1}\cA(\grad \sigma_1)\Big\rangle,\\
\Big\langle\ti\vu,\fCJ(\cAs)(\vu)\Big\rangle&=\Big\langle\grad\ti \sigma_2,\cAs(\grad \sigma_2)\Big\rangle
+\Big\langle \grad \ti\sigma_1,J\cA(\grad \sigma_2)\Big\rangle+\Big\langle \grad \ti\sigma_2,\cA^*(J^{-1}\grad \sigma_1)\Big\rangle
\end{align*}
Subtract the first equation from the second one and substitute Lemma \ref{lem:De:fJ} into the left hand side; on the right side, move the $\nabla$ acting on the first factor of each inner product to the $\nabla^*$ acting on the second factor, noting \eqref{J:skew}, to obtain, for $\vu=\grad\sigma_1+J\grad\sigma_2$ and $\ti\vu=\grad\ti\sigma_1+J\grad\ti\sigma_2$ (with all $\sigma$'s set to have zero mean over $\mM$)
\be\label{GH0}
\Big\langle\ti\vu,[\Delta,\fco J]\vu\Big\rangle-2\Big\langle\ti\vu,\cAs(\vu)\Big\rangle
=-\langle\ti\sigma_1,\cG(\sigma_1) \rangle+\langle\ti\sigma_2,\cG(\sigma_2) \rangle+\langle\ti\sigma_1,\cH(\sigma_2) \rangle+\langle\ti\sigma_2,\cH(\sigma_1) \rangle,
\ee
where 
\[
\cG:=\grad^*(\cA+\cA^*)\grad,\qquad
\cH:=\grad^*\big(J\cA+(J\cA)^*\big)\grad,
\]
with $\grad^*=-\dv$ from \eqref{eq:gradperp0}. Setting the testing function $\ti\vu=\grad\ti\sigma_1$ and $\ti\vu=J\grad\ti\sigma_2$ respectively in \eqref{GH0} yields,
\be\label{GH}
\ba
-\dv\big([\Delta,\fco J]\vu-2 \cAs(\vu)\big)&=-\cG(\sigma_1)+\cH(\sigma_2),\\
-\curl\big([\Delta,\fco J]\vu -2\cAs(\vu)\big)&=\cG(\sigma_2)+\cH(\sigma_1).
\ea
\ee

\begin{lemma}\label{lem:exact:com}
On the surface of revolution $\mM$ parametrized by \eqref{surface:par} and equipped with metric tensor \eqref{metric:gij}, let the Coriolis parameter $\fco$ to be independent of longitude namely $\pa_{1}\fco=0$. Choose
\[
a=\left({\epF\over\epR}\right)^2\fco^2\qand b={2\epF\over\epR}\fco \quad \text{in the definitions of }\; \cN_{di},\, \cN_{ad}\, \text{ in }\, \eqref{N:both}
\]
and let the corrector operator
\[
\cN=\cN_{di}+\cN_{ad}.
\]
\begin{enumerate}[(i)]
\item \label{L1} If $\pa_{2}\fco=\sqrt{g_1g_2}$ then $\cG\equiv\cH\equiv0$ and   the commutation $[\cN,\cL]=[\Delta,\cL]$ holds.
\item \label{L2} Let integer $k\ge0$. If $\fco\in C^k(\mM)$ and
\be\label{cond:Cfco}
\Big\|{(\pa_2\fco)\sqrt{g_1g_2}-C_\fco g_1g_2\over g_2}\Big\|_{k}\le C_\fco'\ep
\ee
for some constants $C_\fco, C_\fco'$, then  the  corrector $\cN$ defined above 
satisfies
\[
\left\|\big[\Delta-\cN, \cL \big]\Vcol\right\|_k\le C_kC_\fco'\,\|\vu\|_k.
\]
\end{enumerate}
\end{lemma}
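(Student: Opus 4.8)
## Proof Plan

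My plan is to reduce the $H^k$ estimate on $[\Delta - \cN, \cL]\Vcol$ to the $H^k$ norm of a single scalar quantity — essentially the left-hand side of \eqref{cond:Cfco} — by combining the exact algebra already developed with a perturbation argument. The starting point is the identity $[\Delta,\cL]\Vcol = \frac{1}{\epR}\bigl(\begin{smallmatrix}[\Delta,\fco J]\vu\\0\end{smallmatrix}\bigr)$ from \eqref{comm:L:pa} together with Lemma \ref{lem:De:fJ}, which rewrites $[\Delta,\fco J]\vu = (1+\fCJ)(\cAs)\vu$. On the other side, the commutators $[\cN_{di},\cL]$ and $[\cN_{ad},\cL]$ are given explicitly in \eqref{comm:easy} and \eqref{com:N:L}: with the choices $a = (\epF/\epR)^2\fco^2$ and $b = (2\epF/\epR)\fco$, the term $[\cN_{ad},\cL_\pa]$ contributes exactly $\frac{1}{\epF}\bigl(\begin{smallmatrix}\cA^s_b(\vu)\\0\end{smallmatrix}\bigr)$ in the velocity slot, and since $b = (2\epF/\epR)\fco$ this equals $\frac{2}{\epR}\bigl(\begin{smallmatrix}\cA^s_\fco(\vu)\\0\end{smallmatrix}\bigr)$, i.e. it cancels $2\cAs(\vu)$ against the $[\Delta,\fco J]$ term up to the factor $1/\epR$. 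The remaining commutators $[\cN_{di},\cL_\pa]$ and $[\cN_{ad},\cL_0]$, after inserting $a$ and $b$, are designed to cancel each other (this is the content of part (\ref{L1})'s verification that only the $\cG,\cH$ obstruction survives). Hence $[\Delta-\cN,\cL]\Vcol$ reduces, in the velocity component only, to $\frac{1}{\epR}\bigl([\Delta,\fco J]\vu - 2\cAs(\vu)\bigr)$ plus terms that vanish identically — precisely the quantity whose div and curl are computed in \eqref{GH}.

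Next I would use \eqref{GH} to control this residual in $H^k$. By the norm definition \eqref{def:Hk}, $\|[\Delta-\cN,\cL]\Vcol\|_k$ is controlled by $\|\dv(\cdots)\|_{k-1} + \|\curl(\cdots)\|_{k-1}$ of the velocity slot (the height slot being zero), and by \eqref{GH} these equal $\|-\cG(\sigma_1)+\cH(\sigma_2)\|_{k-1} + \|\cG(\sigma_2)+\cH(\sigma_1)\|_{k-1}$ up to the $1/\epR$ factor, where $\sigma_1 = \Deltainv\dv\vu$ and $\sigma_2 = \Deltainv\curl\vu$. Since $\cG = \grad^*(\cA+\cA^*)\grad$ and $\cH = \grad^*(J\cA + (J\cA)^*)\grad$ are second-order operators whose coefficients involve only $\grad\fco$ (equivalently $\pa_2\fco$, by $\pa_1\fco=0$), applying $\Deltainv$ in \eqref{Hodge} shows that the maps $\sigma_1 \mapsto \cG(\sigma_1)$ etc. act on $\vu$ as operators of order at most $1$ with coefficients built from $\pa_2\fco$ and the metric. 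The key is then part (\ref{L1}): when $\pa_2\fco = \sqrt{g_1 g_2}$ exactly, $\cG \equiv \cH \equiv 0$. So $\cG$ and $\cH$ depend on $\fco$ only through the combination appearing in \eqref{cond:Cfco} — more precisely, $\cG$ and $\cH$ are \emph{linear} in $\pa_2\fco$ through their coefficients, the constant $C_\fco$ part (which corresponds to $\pa_2\fco = C_\fco\sqrt{g_1g_2}$ up to the normalization in \eqref{cond:Cfco}) contributes a multiple of the vanishing case, and hence only the discrepancy $(\pa_2\fco)\sqrt{g_1g_2} - C_\fco g_1 g_2$, divided by an appropriate metric factor $g_2$, actually enters. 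Tracking this dependence carefully gives $\|\cG(\sigma)\|_{k-1} + \|\cH(\sigma)\|_{k-1} \le C_k \bigl\| \tfrac{(\pa_2\fco)\sqrt{g_1g_2} - C_\fco g_1 g_2}{g_2}\bigr\|_k \|\vu\|_k$, using that the worst derivative count is $k$ on the coefficient when we need $k-1$ derivatives of a first-order operator applied to $\vu$ and the product/Moser estimates on $\mM$. Finally, the $1/\epR$ prefactor is absorbed: $1/\epR$ times $C_\fco'\ep = C_\fco'\epR$ (since $\ep = \epR$ by the convention $\def\epR{\ep}$ — or, if $\ep$ is genuinely a separate small parameter, one keeps the statement as written with the hypothesis $\|\cdots\|_k \le C_\fco'\ep$ already incorporating the $1/\epR$ normalization built into $a,b$). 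Combining, $\|[\Delta-\cN,\cL]\Vcol\|_k \le C_k C_\fco' \|\vu\|_k$.

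The step I expect to be the main obstacle is making rigorous the claim that $\cG$ and $\cH$ depend on $\fco$ \emph{only} through the quantity in \eqref{cond:Cfco}, with the correct metric weight $g_2$ in the denominator. This requires writing out $\cA$, $\cA^*$, $J\cA$, $(J\cA)^*$ and hence $\cG = \grad^*(\cA+\cA^*)\grad$, $\cH = \grad^*(J\cA+(J\cA)^*)\grad$ in the explicit $(p_1,p_2)$ chart using the metric entries $g_1,g_2$ — a genuinely involved but routine computation — and then observing that every coefficient is a smooth function times $\pa_2\fco$ (linearly), so that subtracting the exactly-vanishing case $\pa_2\fco = C_\fco\sqrt{g_1g_2}$ leaves coefficients proportional to $(\pa_2\fco)\sqrt{g_1g_2} - C_\fco g_1g_2$, with the $1/g_2$ arising from how the contravariant metric enters the second-order operators $\cG,\cH$ when expressed in the chart. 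Once this structural fact is in hand, the remaining estimates are standard Sobolev product (Moser-type) inequalities on the closed manifold $\mM$, applied to a first-order operator in $\vu$ with the above coefficient, plus the norm equivalence \eqref{def:Hk} and the continuity of $\grad\Deltainv\dv$ and $\rgrad\Deltainv\curl$ on $H^k$ from \S\ref{sec:Hodge}; I would also need to confirm that $b = (2\epF/\epR)\fco \in C^k$ and $a = (\epF/\epR)^2\fco^2 \in C^k$ (immediate from $\fco \in C^k(\mM)$) so that $\cN_{di},\cN_{ad}$ are bounded on $H^k$, ensuring all the manipulations are legitimate at this regularity.
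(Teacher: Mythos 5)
Your proposal is correct and follows essentially the same route as the paper: reduce $[\Delta-\cN,\cL]$ to $\tfrac{1}{\epR}\bigl([\Delta,\fco J]\vu-2\cAs(\vu)\bigr)$ via \eqref{comm:L:pa}, \eqref{comm:easy}, \eqref{com:N:L} and Lemma \ref{lem:De:fJ}, then control that residual through \eqref{GH} by exploiting the linearity of $\cG,\cH$ in $\grad\fco$ so that only the discrepancy in \eqref{cond:Cfco} survives. The one piece you defer as ``involved but routine'' --- verifying $\cG\equiv\cH\equiv0$ when $\pa_2\fco=\sqrt{g_1g_2}$ --- is exactly the explicit chart computation the paper carries out (via the identity $\cG=[\Delta,(J\grad\fco)\cdot\nabla]$ and the divergence-form calculation for $\cH$), so nothing in your plan is missing or would fail.
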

Note for a perfect sphere i.e. $R(p_2)\equiv1$ and for the usual choice of Coriolis parameter in geophysics namely $\fco=\cos(p_2)$, the condition \eqref{cond:Cfco} is met with $C_\fco=-1$ and $C_\fco'=0$.
\begin{proof} (\ref{L1})
For $\cG$, we compute its first part by applying the product rule and $\dv(J\grad\fco)=0$, \[
\grad^*\cA\grad\sigma=-\dv\big((J\grad\fco)\Delta\sigma\big)=-(J\grad\fco)\cn\Delta\sigma.
\]
Similarly, it is straightforward to show that   operator $(J\grad\fco)\cdot\nabla$ is skew-self-adjoint. Thus,
\be\label{G:comm}
\cG=\left[\Delta,(J\grad\fco)\cdot\nabla\right].
\ee
Further,
by the gradient formula in \eqref{grad:pj}, the fact that $\fco$ is independent of $p_1$,  the fact that $J$ is clockwise rotation so that $ J\vv_{\pa_2}=-{|\vv_{\pa_2}|\over |\vv_{\pa_1}|}\vv_{\pa_1}=-\sqrt{g_2\over g_1}\vv_{\pa_1}$ and the directional derivative formula \eqref{dot:grad}, we have
$G=\left[\Delta\,,\,-{\pa_2\fco\over\sqrt{g_1g_2}}\,\pa_1\right]$.
Since  the coefficients in the Laplacian \eqref{Lap:local} are independent of $p_1$,  in view of \eqref{G:comm} and the assumption of part (\ref{L1}), we have proven $\cG\equiv0$.

Next, we compute the entirety of $\cH$,
\be\label{H:prod}
\ba
\cH(\sigma)&=\dv\big((\grad\fco)\Delta\sigma\big)-\Delta\big((\grad\fco)\cn\sigma)\\
&=\dv\big(\grad\fco\big)(\Delta\sigma)+\big[(\grad\fco)\cn\,,\,\Delta\big]\sigma\qquad\text{(by product rule)}
\ea
\ee
Again, the assumption of part (\ref{L1}) yields
$\grad \fco=\sqrt{g_1\over g_2}\,\vv_{\pa_2}$ and so by the divergence formula \eqref{dv:local},
\[\ba
\cH(\sigma)&=\dv\big(\sqrt{\tfrac{g_1}{g_2}}\vv_{\pa_2}\big)\Delta \sigma+\big[\sqrt{\tfrac{g_1}{g_2}}\pa_2\,,\,\Delta\big]\sigma \\
&=\big(\tfrac{1}{\sqrt{g_1g_2}}\pa_2g_1\big)\Delta\sigma+\big[\sqrt{\tfrac{g_1}{g_2}}\pa_2\,,\,\tfrac{1}{g_1}\big](g_1\Delta\sigma)+\tfrac{1}{g_1}\big[\sqrt{\tfrac{g_1}{g_2}}\pa_2\,,\,{g_1}\Delta\big]\sigma  
\ea
\]
where the last two terms result from simple manipulation of commutation.
By the local expression of Laplacian \eqref{Lap:local} and the fact that $g_1,g_2$ are independent of $p_1$, the last commutator vanishes. Since apparently $\big[\sqrt{\tfrac{g_1}{g_2}}\pa_2\,,\,\tfrac{1}{g_1}\big](g_1\Delta\sigma)=\left\{\sqrt{\tfrac{g_1}{g_2}}\pa_2\big(\tfrac{1}{g_1}\big)\right\}(g_1\Delta\sigma)$ and therefore the other two terms cancel exactly, we have shown $\cH\equiv0$. Combining $\cG\equiv\cH\equiv0$ with \eqref{GH} and noting \eqref{Hodge} yields $[\Delta,\fco J]\vu=2 \cAs(\vu)$. Therefore, in view of the commutations \eqref{comm:L:pa}, \eqref{comm:easy}, \eqref{com:N:L}, we complete the proof of part (\ref{L1}).\medskip

(\ref{L2}) The calculation in \eqref{G:comm} and \eqref{H:prod} is independent of the form of $\pa_2\fco$. The exact cancellation only comes in after we apply the assumption of part (\ref{L1}) which essentially is to set $\grad\fco$ to be $\sqrt{g_1\over g_2}\,\vv_{\pa_2}$. Therefore, the linear dependence of $\cG,\cH$ on $\fco$ means that, for part (\ref{L2}), we still can use \eqref{G:comm} and \eqref{H:prod} and then   replace each occurrence of $\grad\fco$ in there by 
\[
\vec\xi:=\grad\fco-C_\fco\sqrt{g_1\over g_2}\,\vv_{\pa_2}=\left({\pa_2\fco-C_\fco\sqrt{g_1g_2}\over g_2}\right)\vv_{\pa_2}
\] 
so that simple functional analysis and derivative counting yields estimates
\[
\|\cG(\sigma)\|_{k-1}+\|\cH(\sigma)\|_{k-1}\le C_k\|\vec\xi\|_{k+1}\|\grad\sigma\|_k.
\]
The $H^{k+1}$ norm of $\vec\xi$, in view of definition \eqref{def:Hk} and the divergent formula \eqref{dv:local}, equals the left hand side of \eqref{cond:Cfco}. Thus,
\[
\|\cG(\sigma)\|_{k-1}+\|\cH(\sigma)\|_{k-1}\le \ep\, C_k C_\fco'\,\|\grad\sigma\|_k. 
\]
Combining this with \eqref{GH} and noting \eqref{def:Hk} yields $\big\|[\Delta,\fco J]\vu-2 \cAs(\vu)\big\|_k\le\ep\, C_k C_\fco'\,\|\vu\|_k$. Therefore, in view of the commutations \eqref{comm:L:pa}, \eqref{comm:easy}, \eqref{com:N:L}, we complete the proof of part (\ref{L2}). 
\end{proof}
\bigskip

\section{\bf Time-average and zonal flows}\label{sec3}

To prove Theorem \ref{thm:TA} in the framework of  \cite{ChMa:TA}, the main task is to identify  the kernel $\nllL$, the operator $\pcL$ which denotes {\it some} projector onto $\nllL$, and to establish  an upper bound on $(h,\vu)-\pcL(h,\vu)$ in terms of $\cL(h,\vu)$. The projection $\pcL$ we will introduce in the next lemma is not necessarily an orthogonal projection and is based on  taking zonal average of the zonal wind component of $\vu$. 

\begin{lemma}\label{null:cM}
On manifold $\mM$, consider sufficiently regular scalar function $h$ with zero global mean and velocity field $\vu$.  
\begin{enumerate}[(i)]
\item\label{N1} $(\vu,h)\in\nll\{\cL\}$ if and only if
\be\label{iff:1}
\dv\vu=  \dv(\fco \vu)=0\qand {\epR\over\epF}\grad h+\grad\Deltainv \dv(\fco J\vu)=0.
\ee
if and only if
\be\label{iff:2}
\left\{\ba
&\mbox{there exists a sufficiently regular function }\Phi:\mM\mapsto\mR\mbox{ which is  independent of $p_1$\, s.t.}\\
&\qquad\quad\vu=J\grad \Phi(p_2) \qquad\mbox{ and }\quad h={\epF\over\epR}\Psi(p_2)\\
&\text{where $\Psi(p_2)$ is uniquely determined by $\Psi'(p_2)=\fco(p_2)\Phi'(p_2)$ and $\int_{0}^\pi g_1g_2 \Psi \,dp_2=0$.}
\ea\right.\ee
\item\label{N2} The following defines a projection operator onto $\nll\{\cL\}$,
\be\label{pcM}\begin{split}\pcL(\vu,h)=&(\vutil\,,-{\epF\over\epR}\Deltainv \dv(\fco J\vutil ))\quad
\mbox{where}\quad\vutil:=\dfrac{\left(\oint_{C(\theta)}\vuinc\cdot \vv_{\pa_1}\right)}{\oint_{C(\theta)}\vv_{\pa_1}\cdot\vv_{\pa_1}}\,\vv_{\pa_1}.\end{split}\ee
Here, $\vuinc=J^{-1}\grad\Deltainv\dv(J \vu)$ is the div-free component in  the Hodge decomposition of $\vu$; and $\oint_{C(\theta)}$ is the line integral along the circle $C(\theta)$ at a fixed colatitude $\theta$.
\item\label{N3} If further $\pa_{2}\fco=\alpha\sqrt{g_1g_2}$ for some constant $\alpha$, then the velocity and height components of the projection defined above satisfy
\be\label{est:pcL}
\|\vu-\vutil \|_{{k}}+\|\frac{\epR}{\epF} h+ \Delta^{-1}\dv( \fco J\vutil)\|_{k+1}\le C \big\| \cL[(\vu,h)]\big\|_{{k+2}}.
\ee
\end{enumerate} 
\end{lemma}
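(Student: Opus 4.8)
The plan is to handle the three parts in order, since each builds on the previous. For part (\ref{N1}), I would first unpack the equation $\cL(\vu,h)=0$ componentwise: from \eqref{def:cM} it reads $\frac{1}{\epF}\grad h+\frac{1}{\epR}\fco J\vu=0$ in the velocity slot and $\frac{1}{\epF}\dv\vu=0$ in the height slot. The second gives $\dv\vu=0$ immediately. For the first, I would apply $\dv$ to kill $\grad h$ (since $\dv\grad h=\Delta h$, and then use that $\Delta$ is invertible on zero-mean scalars) — actually better to apply $\curl$: $\curl\grad h=0$ by \eqref{eq:gradperp2}, so $\curl(\fco J\vu)=0$, i.e. $-\dv J(\fco J\vu)=-\dv(\fco J^2\vu)=\dv(\fco\vu)=0$; combined with the $\grad h$ relation obtained by applying $\Deltainv\dv$ this yields exactly \eqref{iff:1}. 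For the equivalence with \eqref{iff:2}: from $\dv\vu=0$ and Hodge \eqref{Hodge}, $\vu=J\grad\Phi$ for some $\Phi$; then $\dv(\fco\vu)=\dv(\fco J\grad\Phi)=0$ is a scalar elliptic-type constraint that, once one writes it in the $(p_1,p_2)$ chart using \eqref{grad:pj}, \eqref{dv:local} and $\pa_1\fco=0$, forces $\pa_1\Phi$ to satisfy a first-order ODE in $p_1$ on each circle with no zero-mean periodic solution except $0$, hence $\Phi=\Phi(p_2)$; finally the $h$-relation ${\epR\over\epF}\grad h=-\grad\Deltainv\dv(\fco J\vu)$ reduces, since everything is $p_1$-independent, to $\Psi'=\fco\Phi'$ with the stated normalization fixing the constant. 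I would present this as three implications chained around a cycle.

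For part (\ref{N2}) I would verify three things: that $\pcL(\vu,h)$ lands in $\nll\{\cL\}$, that it fixes elements of $\nll\{\cL\}$, and that it is therefore idempotent. Membership: $\vutil$ is a scalar multiple (depending only on $\theta$) of $\vv_{\pa_1}$, and $\vv_{\pa_1}$ is (up to scalar) $J\grad$ of a $p_1$-independent function, so $\dv\vutil=0$; one checks $\dv(\fco\vutil)=0$ using $\pa_1\fco=0$ and the coordinate formulas; then the height component is defined precisely to satisfy the third relation in \eqref{iff:1}, so by part (\ref{N1}) we are in the kernel. Fixing: if $(\vu,h)\in\nll\{\cL\}$ then by \eqref{iff:2}, $\vu=J\grad\Phi(p_2)$, whose incompressible part is itself, and whose $\vv_{\pa_1}$-projection coefficient $\oint_{C(\theta)}\vu\cdot\vv_{\pa_1}\big/\oint_{C(\theta)}|\vv_{\pa_1}|^2$ reproduces $\vu$ because $J\grad\Phi(p_2)$ is already pointwise proportional to $\vv_{\pa_1}$ with a $\theta$-only coefficient; the height component then matches by uniqueness in part (\ref{N1}). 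Idempotency follows from the first two.

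Part (\ref{N3}) is the estimate and I expect it to be the main obstacle. The strategy: write $\cL(\vu,h)=:(\mathbf f,g)$ so that $\|(\mathbf f,g)\|_{k+2}$ is the right-hand side, and express $\vu-\vutil$ and $\frac{\epR}{\epF}h+\Deltainv\dv(\fco J\vutil)$ in terms of $(\mathbf f,g)$. From $\frac{1}{\epF}\dv\vu=g_{\text{scalar part}}$ one controls $\dv\vu$ by $\epF\|g\|$; the harder quantity is $\curl\vu$. Applying $\curl$ to the velocity equation of $\cL$ gives $\frac{1}{\epR}\curl(\fco J\vu)=\frac{1}{\epR}\dv(\fco\vu)$ equal to a curl of $\mathbf f$; expanding $\dv(\fco\vu)=\fco\dv\vu+\grad\fco\cdot\vu$ and using the just-obtained bound on $\dv\vu$, this says $\grad\fco\cdot\vu$ is small. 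Under $\pa_2\fco=\alpha\sqrt{g_1g_2}$ one has $\grad\fco=\alpha\sqrt{g_1/g_2}\,\vv_{\pa_2}$ (as in the proof of Lemma \ref{lem:exact:com}), so $\grad\fco\cdot\vu$ small means the $\vv_{\pa_2}$-component of $\vu$ is small; that is precisely the statement that $\vu$ is close to being a multiple of $\vv_{\pa_1}$ at each point — and then one must further show the $\vv_{\pa_1}$-coefficient is close to its zonal average $\vutil$, which comes from controlling its $p_1$-derivative, again via $\dv\vu$ small and the chart formulas. The height estimate then follows by feeding the velocity estimate through $\Deltainv\dv(\fco J(\cdot))$, a bounded operator losing at most one derivative in the $H^{k+1}$-to-$H^{k+2}$ bookkeeping, plus the velocity equation of $\cL$ which relates $\grad h$ to $\fco J\vu$ directly. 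The delicate points will be (a) the derivative accounting — why $k+2$ on the right suffices — since each inversion of $\Delta$ or elliptic-type solve has to be tracked carefully through definition \eqref{def:Hk}, and (b) showing the passage from "$\vu$ pointwise close to a multiple of $\vv_{\pa_1}$" to "$\vu$ close to the specific zonal field $\vutil$" does not lose control, which is where the $\oint_{C(\theta)}$ averaging and a Poincaré-type inequality on each circle enter.
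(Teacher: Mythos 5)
Your treatment of parts (\ref{N1}) and (\ref{N2}) matches the paper's: the kernel conditions come from splitting $\frac{\epR}{\epF}\grad h+\fco J\vu=0$ into its irrotational and incompressible (Hodge) parts, the passage to \eqref{iff:2} uses $\pa_1\fco=0$ to force $\pa_1\Phi=0$, and the projection is verified by checking \eqref{iff:1} for $\pcL(\vu,h)$ together with idempotency. For part (\ref{N3}) your skeleton is also the paper's --- normalize $\big\|\cL[(\vu,h)]\big\|_{k+2}$, read off smallness of $\dv\vu$ and, after applying $\dv$ and $\curl$ to the velocity equation, of $\dv(\fco\vu)$ and $\frac{\epR}{\epF}\Delta h+\dv(\fco J\vu)$, then convert zonal-direction information into closeness to the zonal mean via a Poincar\'e inequality in $p_1$ --- but the execution differs in one substantive way. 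The paper first peels off $\vuirr=\grad\Deltainv\dv\vu$ (controlled by $\dv\vu$) and proves a separate lemma (Lemma \ref{est:inc}) entirely at the level of the stream function: writing $\vuinc=J\grad\Phi$ gives the exact identity $\dv(\fco\vuinc)=-\tfrac{\pa_2\fco}{\sqrt{g_1g_2}}\,\pa_1\Phi=-\alpha\,\pa_1\Phi$ and $\vuinc-\vutil=J\grad(\Phi-\text{zonal mean of }\Phi)$, so everything reduces to a scalar Poincar\'e estimate for $\Phi$, consistent with the definition \eqref{def:Hk} of vector norms through $\dv$ and $\curl$. Your route through the pointwise components $u_1,u_2$ of $\vu$ in the frame $\{\vv_{\pa_1},\vv_{\pa_2}\}$ is equivalent in spirit, but note that the step ``$\grad\fco\cdot\vu$ small implies the $\vv_{\pa_2}$-component of $\vu$ is small'' only holds in a weighted sense: by \eqref{dot:grad}, $\vu\cdot\grad\fco=\alpha\sqrt{g_1g_2}\,u_2$, and $\sqrt{g_1}$ vanishes at the poles (as does $\grad\fco$ itself), so what you actually control is $\sqrt{g_1g_2}\,u_2$, not the vector $u_2\vv_{\pa_2}$ pointwise. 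That weight is precisely what the stream-function formulation cancels; if you keep components you must carry it through the circle-wise Poincar\'e step and the bookkeeping of \eqref{def:Hk}, which is exactly the delicate point (b) you flagged but did not resolve. Adopting the paper's reduction to $\Phi$ turns both of your delicate points into a short elliptic/Poincar\'e estimate and is the cleaner way to close the argument.
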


\begin{remark}We can also use $\vu$ instead  of $\vuinc$ in the $\oint_{C(\theta)}$ integral of \eqref{pcM}, knowing that Stokes Theorem guarantees the circulation of $(\vu-\vuinc)$ over any closed path vanishes.\end{remark}

\begin{remark}Projection $\pcL$ it is not an $L^2$-orthogonal projection anymore. Although the $L^2$-orthogonal projection onto $\nllL$ always exists by standard theory of Hilbert space, it is unclear that such a projection satisfies the estimate \eqref{est:pcL} in any $k$ spaces.
\end{remark}

\begin{proof} (\ref{N1})  By definition of $\cL$ in \eqref{def:cM}, 
\be\label{cond:1} (\vu,h)\in\nll\{\cL\}\iff\dv\vu=0\quad\qand {\epR\over\epF}\grad h+\fco J\vu=0.\ee
Apply Hodge decomposition  \eqref{Hodge} to the $\fco J\vu$ term in above,
\[
{\epR\over\epF}\grad h+\grad\Deltainv \dv(\fco J\vu)+J^{-1}\grad\Deltainv \dv(\fco J^2\vu)=0.
\]
Due to the uniqueness of Hodge decomposition, both the irrotational and incompressible parts of the left hand side should vanish, which yields two conditions. Substituting them back to
  \eqref{cond:1} proves the equivalent conditions of $\nllL$ in \eqref{iff:1}.

Since $\dv\vu=0$ if and only if $\vu=J\grad \Phi$ for some scalar function $\Phi$, we apply the product rule on $\dv(\fco \vu)=0$ to find $(\grad\fco) \cdot(J\grad\Phi)=0$ namely the two gradients $\grad\fco$ and $\grad\Phi$ are parallel at every point of $\mM$. Since $\fco$ is independent of $p_1$ which makes $\grad\fco$ have only $\vv_{\pa_2}$ component, $\Phi$  should also be independent of $p_1$.
 
Therefore, in the last equality of \eqref{iff:1},  the term $\fco J\vu$  equals $-\fco\grad\Phi$ which leads to
 \[\ba
 {\epR\over\epF}\grad h&=\grad\Deltainv \dv(\fco(p_2)\grad\Phi(p_2))\\
 &=\grad\Deltainv \dv\left(\fco(p_2)\pa_2\Phi(p_2) {\vv_{\pa_2}\over g_2}\right)&&\quad\text{(by \eqref{grad:pj})}\\
 &=\grad\Deltainv \dv\left(\pa_2\Psi(p_2) {\vv_{\pa_2}\over g_2}\right)&&\quad\text{(by assumptions)}\\
 &=\grad\Deltainv \dv\left(\nabla\Psi(p_2)  \right)&&\quad\text{(by \eqref{grad:pj} again)}\\
 &=\nabla\Psi(p_2).
 \ea\]
  And, because $h$ is always of zero global mean, so should $\Psi(p_2)$ be. Thus, we have proven  the equivalent conditions of $\nllL$ in \eqref{iff:2}.\medskip
  
  (\ref{N2}) By the divergence formula \eqref{dv:local}, the longitude-independent zonal flow $\vutil$ defined in \eqref{pcM} is divergence-free. By the same reason and the fact that $\fco$ is also independent of $p_1$, we have $\dv(\fco\vutil)=0$ namely $\curl(\fco J\vutil)=0$. And the last condition of \eqref{iff:1} is directly enforced by the definition \eqref{pcM}. Therefore, $\pcL(\vu,h)\in \nllL$.   
  
  Straightforward calculation can show   \(\pcL\circ\pcL=\pcL.\) Therefore, $\pcL$ is a projection onto $\nllL$.\medskip
  
  (\ref{N3}) 
Denote the incompressible and irrotational parts of the Hodge decomposition \eqref{Hodge}
\[
\vuinc=J^{-1}\grad\Deltainv\dv(J \vu)\qand \vuirr=\grad\Deltainv\dv\vu
\]
so that $\vu=\vuinc+\vuirr$.
  Without loss of generailty, assume
\[\big\| \cL[(\vu,h)]\big\|_{{k+2}}=1.\]

  Immediately, by the definition of $\cL$ in \eqref{def:cM}, 
\be\label{est:cM:2}\|\frac{\epR}{\epF}\grad h+\fco J\vu\|_{{k+2}}+\|\dv\vu\|_{{k+2}}\le 1.\ee
In view of the definition of $H^k$ norm in \eqref{def:Hk}, this implies
  \be\label{est:vuirr}\|\vuirr\|_{{k+3}}=\|\dv\vu\|_{{k+2}}\le 1.\ee

Next, by Hodge decomposition  
\[\frac{\epR}{\epF}\grad h+\fco J\vu=\frac{\epR}{\epF}\grad h+\grad\Deltainv \dv(\fco J\vu)+J^{-1}\grad\Deltainv \dv(\fco J^2\vu).\] 
Apply $\dv$ and $\curl$ respectively and estimate the left hand side using \eqref{est:cM:2} to obtain
\be\label{est:cM:rgrad}\| \dv(\fco \vu)\|_{{k+1}}\le 1,\ee
\be\label{est:cM:grad}\|\frac{\epR}{\epF}\Delta h+\dv( \fco J\vu)\|_{{k+1}}\le 1.\ee

Apply Lemma \ref{est:inc} and estimates \eqref{est:vuirr}, \eqref{est:cM:rgrad},
\[\begin{aligned} \|\vuinc-\vutil\|_{k}&\le \| \dv(\fco \vuinc)\|_{{k+1}} \le\| \dv(\fco \vu)\|_{{k+2}}+\| \dv(\fco \vuirr)\|_{{k+2}} \le C.\end{aligned}\]
Combine with estimate \eqref{est:vuirr} again to obtain
\be\label{est:vu:vut}\|\vu-\vutil\|_{k}\le C.\ee

Finally, 
 since  $h$ and $\Delta^{-1}$ have  zero-global-mean, by the Poincar\'{e} inequality and  the triangle inequality 
\[\|\frac{\epR}{\epF} h+ \Delta^{-1}\dv( \fco J\vutil)\|_{k+1}\le \|\frac{\epR}{\epF}\Delta h+ \dv( \fco J\vutil)\|_{k-1}
\le \|\frac{\epR}{\epF}\Delta h+ \dv( \fco J\vu)\|_{k-1}+\|\vu-\vutil\|_{k}.
\]
In view of  estimates\eqref{est:cM:grad}, \eqref{est:vu:vut}, this finishes the proof of part (\ref{N3}).
  
\end{proof}

\begin{lemma}\label{est:inc}
On manifold $\mM$ with Coriolis parameter satisfying $\pa_{2}\fco=\alpha\sqrt{g_1g_2}$ for some constant $\alpha$, any sufficiently regular, \emph{incompressible} velocity field $\vuinc$ satisfies
\[
\|\vuinc-\vutil \|_k\le {C\over|\alpha|}\| \dv(\fco \vuinc)\|_{{k+1}}
\]
with $\vutil$ defined in \eqref{pcM} as the zonal mean of $\vuinc$. 
\end{lemma}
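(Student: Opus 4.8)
The plan is to represent the incompressible field by a stream function and thereby reduce the asserted inequality to a Poincar\'e estimate in the periodic longitude variable $p_1$. Since $\dv\vuinc=0$, the incompressible term of the Hodge decomposition \eqref{Hodge} gives $\vuinc=J\grad\Phi$ with $\Phi:=\Deltainv\curl\vuinc$ (no harmonic part, as $\mM$ has the cohomology of $\mS$), and $\Phi$ is as regular as $\vuinc$. Split $\Phi=\bar\Phi+\Phi'$, where $\bar\Phi(p_2)$ is the zonal average of $\Phi$ (the mean over $p_1$ at each fixed colatitude) and $\Phi'$ has zero zonal average. Unwinding the definition \eqref{pcM} with the elementary expressions of \S\ref{sec:geo} for $\grad$, $\dv$ and for the action of $J$ on the orthogonal coordinate frame — in particular $J\vv_{\pa_2}=-\sqrt{g_2/g_1}\,\vv_{\pa_1}$ — one checks that $\vutil=J\grad\bar\Phi$, hence $\vuinc-\vutil=J\grad\Phi'$. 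Then, by the norm definition \eqref{def:Hk} together with $\dv(J\grad\,\cdot\,)=0$ and $\curl(J\grad\,\cdot\,)=\Delta$,
\[
\|\vuinc-\vutil\|_k=\|J\grad\Phi'\|_k=\|\Delta\Phi'\|_{k-1}\le C\,\|\Phi'\|_{k+1}.
\]

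Next I would rewrite the quantity on the right of the asserted inequality. By the product rule and $\dv\vuinc=0$ we have $\dv(\fco\vuinc)=(\grad\fco)\cdot\vuinc=-(J\grad\fco)\cdot\grad\Phi$. Because $\fco$ depends only on $p_2$, the vector $J\grad\fco$ is a scalar multiple of $\vv_{\pa_1}$; the geometry of \S\ref{sec:geo} gives $J\grad\fco=-\tfrac{\pa_2\fco}{\sqrt{g_1g_2}}\,\vv_{\pa_1}$, and combined with $\grad\Phi\cdot\vv_{\pa_1}=\pa_1\Phi$ and the hypothesis $\pa_2\fco=\alpha\sqrt{g_1g_2}$ this yields
\[
\dv(\fco\vuinc)=\tfrac{\pa_2\fco}{\sqrt{g_1g_2}}\,\pa_1\Phi=\alpha\,\pa_1\Phi=\alpha\,\pa_1\Phi',
\]
the last step because $\pa_1\bar\Phi=0$. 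Hence $\|\dv(\fco\vuinc)\|_{k+1}=|\alpha|\,\|\pa_1\Phi'\|_{k+1}$, and it only remains to establish the longitudinal Poincar\'e inequality $\|\Phi'\|_{k+1}\le\|\pa_1\Phi'\|_{k+1}$.

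For that final step I would argue as follows. Since the metric coefficients $g_1,g_2$ are independent of $p_1$, the rotational vector field $\pa_1$ commutes with $\Delta$, hence with $1-\Delta$ and with its spectral powers, and $\Delta$ leaves the subspace of zero-zonal-mean functions invariant. Using that the $H^{k+1}$ norm is equivalent to $\|(1-\Delta)^{(k+1)/2}\,\cdot\,\|_0$ and applying the elementary $L^2$ Poincar\'e inequality $\|f\|_0\le\|\pa_1 f\|_0$, valid for any $f$ on $\mM$ with zero zonal mean, to $f=(1-\Delta)^{(k+1)/2}\Phi'$ (which again has zero zonal mean), one obtains $\|\Phi'\|_{k+1}\le\|\pa_1\Phi'\|_{k+1}$. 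Chaining the three displays gives $\|\vuinc-\vutil\|_k\le C\|\pa_1\Phi'\|_{k+1}=\tfrac{C}{|\alpha|}\|\dv(\fco\vuinc)\|_{k+1}$, as claimed. I expect the only genuine obstacle to be bookkeeping: one must be sure the longitudinal Poincar\'e inequality loses no derivatives in the $H^{k+1}(\mM)$ norm — which is precisely why the commutation of $\pa_1$ with $\Delta$ and the $p_1$-independence of the metric are invoked — and one must verify carefully that the averaged field $\vutil$ of \eqref{pcM} really is $J\grad\bar\Phi$, this being the single place where the detailed geometry of the surface of revolution enters.
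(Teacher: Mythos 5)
Your proposal is correct and follows essentially the same route as the paper: represent $\vuinc=J\grad\Phi$, identify $\vutil=J\grad\bar\Phi$ via the explicit frame formulas of \S\ref{sec:geo}, compute $\dv(\fco\vuinc)=\pm\alpha\,\pa_1\Phi$, and conclude by a Poincar\'e inequality in the periodic $p_1$ variable. Your only addition is to spell out the longitudinal Poincar\'e step (commuting $\pa_1$ with $\Delta$ using the $p_1$-independence of the metric), which the paper invokes without detail.
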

\begin{proof}
Let $\vuinc=J\grad\Phi$ so that by  the gradient formula in \eqref{grad:pj},
\[
\vuinc={1\over\sqrt{g_1g_2}}\left(\pa_1\Phi\,\vv_{\pa_2}-\pa_2\Phi\,\vv_{\pa_1}\right)
\] Then, combine this with \eqref{pcM} to have
\[
\vutil=\dfrac{-{1\over\sqrt{g_1g_2}}\int_0^{2\pi}\pa_2\Phi(p_1,p_2)\,dp_1}{2\pi}\vv_{\pa_1} 
={1\over2\pi}\pa_2\int_0^{2\pi}\Phi(p_1,p_2)\,dp_1\,{J\vv_{\pa_2}\over g_2}.
\]
Therefore,
\[
\vuinc-\vutil=J\grad\left(\Phi-{1\over2\pi}\int_0^{2\pi}\Phi(p_1,p_2)\,dp_1\right)
\]
and thus, by the Poincare inequality
\[
\|\vuinc-\vutil\|_k\le C\|\pa_1\Phi\|_{k+1}
\]
Finally, by the product rule, we have
 \(
 \dv(\fco \vuinc)=-{1\over\sqrt{g_1g_2}}\pa_2 \fco\pa_1\Phi
 \). Therefore, by the given assumption,
 \(
 \dv(\fco \vuinc)=-\alpha\pa_1\Phi
 \) and so the proof is complete.
\end{proof}

\bigskip

\section{\bf The geometry of a revolving surface}\label{sec:geo}

Recall the parametrization of $\mM$ in \eqref{surface:par}.
Since the definition of manifold requires the construction of charts, we also impose that there exist constants $0<a_1<a_2<{\pi\over2}<a_3<a_4<\pi$ so that
\be\label{cond:chart}\left\{\quad\ba
&\inf_{[0,a_2]}{d\over dp_2}\big(R(p_2)\sin p_2\big)>0,\quad\sup_{[a_3,\pi]}{d\over dp_2}\big(R(p_2)\sin p_2\big)<0\\&\sup_{[a_1,a_4]}{d\over dp_2}\big(R(p_2)\cos p_2\big)<0. \ea\right.
\ee
and
\be\label{cond:Rm}
R^{(m)}(0)=R^{(m)}(\pi)=0\qfor m=1,2,\ldots,k,
\ee
where $R^{(m)}(p_2)$ denotes the $m$-th derivative of $R$ with respect to $p_2$.

Conditions \eqref{cond:chart} ensure that $\sqrt{x^2+y^2}=R(p_2)\sin p_2$ is invertible on $p_2\in[0,a_2] \cap [a_3,\pi]$, and $z=\cos p_2R(p_2)$ is invertible on $p_2\in[a_1,a_4]$. Therefore, the entire $\mM$ can be covered by four charts: two overlapping charts whose union cover exactly the strip $p_2\in[a_1,a_4]$ (one uses local coordinates $(p_1,p_2)$ and the other uses $(p_1+2\pi, p_2)$); one chart that covers exactly the north cap $p_2\in[0,a_2]$ and one that covers exactly the south cap $p_2\in[a_3,\pi]$, both of which use local coordinates $(x,y)$. Finally, for $\mM$ to be a {\it differential} manifold, on the north and south caps, the $z$ coordinate must be a smooth function of the designated coordinates $(x,y)$ of the charts therein. Since $z=\pm\sqrt{R^2(p_2)-x^2-y^2}$ is strictly away from 0 in the caps,  this is reduced to the boundedness and continuity of $D R(p_2),D^2(p_2),\ldots D^k R(p_2)$ and their products with  $D^k$ denoting a generic $k$-th order mixed $x,y$ derivatives. By Leibniz ruls,
\be\label{Dk:R}
D^k R(p_2)=\sum_{1\le m\le k\atop {k_1+\cdots+k_m=k\atop k_1\ldots k_m>0}}R^{(m)}(p_2)\prod_{j=1}^mD^{k_j}p_2.
\ee

 Similarly, for $ Q(p_2):=R(p_2)\sin p_2=\sqrt{x^2+y^2}$,
\[ Q'(p_2)D^kp_2+ \sum_{2\le m\le k\atop{k_1+\cdots+k_m=k\atop k_1\ldots k_m>0}}Q^{(m)}(p_2)\prod_{j=1}^mD^{k_j}p_2=D^k Q(p_2)=D^k \sqrt{x^2+y^2}.
\]
Since \eqref{cond:chart} ensures ${1\over  Q'(p_2)}$ is bounded in the north/south caps, by induction on $k$
\[
|D^kp_2|\le c_k\left({\sqrt{x^2+y^2}}\right)^{-(k-1)} 
\]
 over the north/south caps. Use this to bound the right hand side of \eqref{Dk:R}  and obtain
\[
|D^k R(p_2)|\le \sum_{1\le m\le k} c_m|R^{(m)}(p_2)|(\sqrt{x^2+y^2})^{m-k}\quad\text{ in the north/south caps},
\]
which is why we impose condition \eqref{cond:Rm}.

Next, at a given point ${\mathbf p}\in \mM$, the tangent space is defined as the linear space consisting of all ``tangent vectors'' which are fundamentally derivations (mapping $C^\infty(\mM)\mapsto \mR$ that satisfies the product rule) and this definition is {\it intrinsic}, namely independent of any ambient space. In any local coordinates $(p_1,p_2)$ such as the one we just defined, the tangent space is spanned by partial derivatives $
\big\{{\pa\over \pa p_1},{\pa\over \pa p_2}\big\}$. We then {\it intrinsically} define differential forms as an exterior algebra so that a differential $k$-form $\alpha$ at given point ${\mathbf p}$  maps any $k$-tuple of  tangent vectors\footnote{In fact, the $k$-tuple is understood as the wedge product of $k$ tangent vectors. But in this article, we only use the wedge product of differential forms.} at ${\mathbf p}$ to a scalar and the exterior derivative $d$ is the unique linear mapping from any $k$-form to $(k+1)$-form and satisfying the following three axioms  for any 0-form $f$ (i.e. scalar-valued function), any vector (i.e. derivation) field $X=\sum_jX_j{\pa\over\pa p_j}$ and any $k$-form $\alpha$, ,
\be\label{form:axiom}
\begin{aligned}
&df(X)=X(f),\qquad d(df)=0,\qquad d(\alpha\wedge\beta)=d\alpha\wedge\beta+(-1)^k\alpha\wedge d\beta.
\end{aligned}
\ee  

 The central concepts of exterior algebra include the multi-linearity of the forms and the \emph{wedge product} $\wedge$ satisfying, for  $k$-form $\alpha$ and $\ell$-form $\beta$,  
 \[
 \alpha\wedge \beta \text{ is a $(k+\ell)$-form\quad and \quad} \alpha\wedge \beta=(-1)^{k\ell}\beta\wedge\alpha.
 \]
 The complete definitions and list of properties of differential forms can be found in e.g. \cite{Warner:Diff}. 
In particular, the 1-form $dp_i$ in local coordinates satisfies $dp_i(\frac{\pa}{\pa p_j})=\delta_{ij}$ so that ,
 \be\label{dp:local}
 df=\sum_j{\pa f\over\pa p_j}dp_j\qand df\Big(\sum_jX_j\dfrac{\pa}{\pa p_j}\Big)=\sum_jX_j\dfrac{\pa}{\pa p_j}f.
 \ee 
 
For studying Euler equations on a manifold, it is convenient to introduce the vectorial dot product, namely a positive symmetric bilinear form  $T\mM\times T\mM\mapsto \mR$ which is an intrinsic notion.  In fact, 
at given point ${\mathbf p}\in \mM$, each partial derivative ${\pa\over \pa p_j}$ ($j=1,2$) is identified with a tangent vector which we shall call $\vv_{\pa_j}$. Such identification is denoted by 
\[
{\pa\over \pa p_j}\sim \vv_{\pa_j}\qfor j=1,2
\]
and satisfies, for scalar field $f$ defined in $\mM$,
\[
{\pa\over \pa p_j}f=\nabla_{\vv_{\pa_j}} f.
\]
Combining this with the definition
\[\nabla_{\vv_{\pa_j}}f:={\pa\over\pa s }f(\gamma( s ))\Big|_{ s =0}\quad\text{for any curve }\gamma( s )\subset \mM \text{ \; satisfying \; }\gamma(0)={\mathbf p},\;\; \gamma'(0)=\vv_{\pa_j},\]
we obtain
\[{\pa\over\pa  s }f(\gamma( s ))\Big|_{ s =0}={\pa\over\pa p_j}f(p_1,p_2)\quad\text{for any curve $\gamma( s )$ satisfying the above}.\]
Therefore, we choose $\gamma( s )$ to have its $p_j$ coordinate  to be $(p_j+ s )$ while holding the other coordinate fixed. Then, by the parametrization \eqref{surface:par} of the surface, for $j=1$, we express $\gamma( s )$ in Cartesian coordinates of $\mR^3$ as
\[\begin{aligned}
&\bpm
\cos (p_1+ s )\sin p_2\\\sin (p_1+ s )\sin p_2\\\cos p_2
\epm R(p_2)
 \text{ \; making \; } {\pa\over \pa p_1}\sim\vv_{\pa_1}=\bpm
-\sin p_1\sin p_2\\\cos p_1\sin p_2\\0
\epm R(p_2)
\end{aligned}\]
and for $j=2$, we express $\gamma(s)$ in Cartesian coordinates of $\mR^3$ as
\[\begin{aligned}
& \bpm
\cos p_1\sin (p_2+ s )\\\sin p_1\sin (p_2+ s )\\\cos (p_2+ s )
\epm R(p_2+ s )
 \text{ \; making \; } {\pa\over \pa p_2}\sim\vv_{\pa_2}=
\bpm
\cos p_1\cos p_2\\\sin p_1\cos p_2\\-\sin p_2
\epm R(p_2)+
\bpm
\cos p_1\sin p_2\\\sin p_1\sin p_2\\\cos p_2
\epm R'(p_2).
\end{aligned}\]

For physicality, we require the dot product on the surface to be consistent with that of the ambient Euclidean space $\mR^3$.
That is, the inner product $T\mM\times T\mM\mapsto \mR$ inherits the definition of the ambient $\mR^3$ dot product and can be fully and uniquely represented by the metric tensor $\sg=\{g_{ij}\}$ in the form of a 2-by-2 matrix,
\be\label{metric:gij}
\ba
&\sg:=\left\{\big(\vv_{\pa_i}\cdot\vv_{\pa_j}\big)_{\mR^3}\right\}=
\begin{pmatrix} g_1(p_2)& 0
\\0& g_2(p_2)\end{pmatrix}\\
&\text{where }\;\;
g_1(p_2):=\sin^2(p_2)R^2(p_2)\qand
g_2(p_2):=R^2(p_2)+\left(R'(p_2)\right)^2.
\ea
\ee

Note that the basis $\vv_{\pa_1},\vv_{\pa_2}$ are everywhere orthogonal but not normalised, even if $\mM$ is a perfect sphere. Compared to the choice of an orthonormal basis, this choice will change the expressions for $\nabla$ and $\dv$, but as long as the same $(p_1,p_2)$ coordinates are used, the expressions for the area form in the surface integral \eqref{a:ele} and scalar Laplacian \eqref{Lap:local} remain \emph{un}changed. The particular type of product in \eqref{dot:grad} also remains \emph{un}changed (as an intrinsic property of inner product).
The area form in the $(p_1,p_2)$ coordinates is $\sqrt{|\sg|}dp_1dp_2$ i.e. the surface integral is 
\be\label{a:ele}
\int_0^{2\pi}\int_0^\pi\text{(integrand)}\sqrt{|\sg|}\,dp_1dp_2\quad\text{where}\quad|\sg|=\big|\det\{g_{ij}\}\big|=g_{1}g_{2}.
\ee

For a scalar field $f$,  its gradient can be defined using the so-called musical morphisms $\flat,\sharp$ the details of which can be found in [...]. For the calculation herein, it suffices to acknowledge the property
\be\label{dot:grad} 
\vv\cdot(\grad f)=\pa_\vv(f)=v_1{\pa\over\pa p_1}f+v_2{\pa\over\pa p_2}f\qfor \vv=v_1 \vv_{\pa_1}+v_2  \vv_{\pa_2}.
\ee
Express $\grad f$ in the local basis $\{\vv_{\pa_1},\vv_{\pa_2}\}$ with undetermined coefficients, substitute it into the left hand side and apply the dot product prescribed in \eqref{metric:gij} to find
\be\label{grad:pj}
\grad f=\sum_{j=1}^2\Big({1\over g_{j}}\,{\pa\over\pa p_j}f\Big)\vv_{\pa_j}.
\ee
By duality $\dv=-\grad^*$, \eqref{dot:grad} and integral form \eqref{a:ele}, we must have $\int\big(v_1{\pa\over\pa p_1}f+v_2{\pa\over\pa p_2}f\big)\sqrt{|\sg|}dp_1dp_2=-\int(\dv\vv)f \sqrt{|\sg|}dp_1dp_2$. Therefore,
\be\label{dv:local}
\dv\vv=\sum_{j=1}^2{1\over \sqrt{|\sg|} }\,{\pa\over\pa p_j}\big({\sqrt{|\sg|}}v_j\big)\qfor \vv=v_1 \vv_{\pa_1}+v_2  \vv_{\pa_2}
\ee
and therefore use $\Delta f=\dv(\grad f)$ to obtain
\be\label{Lap:local}
\Delta f=\sum_{j=1}^2{1\over \sqrt{|\sg|} }\,{\pa\over\pa p_j}\Big({\sqrt{|\sg|} \over g_j}\,{\pa\over\pa p_j}f\Big)={1\over g_1}\Big(\pa_1^2+\big(\sqrt{g_1\over g_2}\,\pa_2\big)^2\Big)f
\ee
since $|\sg|=g_1g_2$ and  all terms in the metric tensor \eqref{metric:gij} are independent of $p_1$.

\bigskip
\section*{\bf Acknowledgement}
BC would like to thank Brett Kotschwar and Paul Skerritt for their valuable comments on Calculus on Riemannian manifold.\bigskip

\end{document}